\def\reals{\mathbb R}
\def\hd{\widehat{d}}
\newcounter{rot}
\def\td{d^*}
\def\cR{{\mathcal R}}
\def\nn{\nonumber}
\def\a{\alpha}   \def\D{\Delta}
\def\e{\varepsilon}    \def\g{\gamma}
  \def\k{\kappa}
\def\z{\zeta} \def\th{\theta}    
\def\La{\Lambda}  \def\n{\nu} \def\p{\pi}
\def\r{\rho}  \def\s{\sigma} 
\def\t{\tau}
\def\cP{{\cal P}}
\def\pD{\partial D}
\def\wR{\widehat{R}}
\def\wS{\widehat{S}}
\newtheorem{theorem}{Theorem}
\newtheorem{lemma}[theorem]{Lemma}
\newtheorem{Remark}{Remark}
\newtheorem{claim}{Claim}
\def\cX{{\mathcal X}}
\def\area{\text{area}}
\newcommand{\wh}[1]{\widehat{#1}}
\newcommand{\rdup}[1]{{\left\lceil #1\right\rceil }}
\newcommand{\rdown}[1]{{\left\lfloor #1\right \rfloor}}
\newcommand{\brac}[1]{\left(#1\right)}
\newcommand{\bfrac}[2]{\left(\frac{#1}{#2}\right)}
\def\cE{{\mathcal E}}
\def\cL{\mathcal L}
\newcommand{\set}[1]{\left\{#1\right\}}
\def\E{\mathbb{E}}
\def\Pr{\mathbb{P}}
\newcommand{\ignore}[1]{}
\def\cA{{\mathcal A}}
\def\cB{{\mathcal B}}
\def\cC{{\mathcal C}}
\def\cX{{\mathcal X}}
\def\cU{{\mathcal U}}
\newcommand{\beq}[2]{\begin{equation}\label{#1}#2\end{equation}}
\newcommand{\mults}[1]{\begin{multline*}#1\end{multline*}}
\newcommand{\mult}[2]{\begin{multline}\label{#1}#2\end{multline}}
\newcommand{\pic}[1]{
\begin{tikzpicture}
#1
\end{tikzpicture}}
\begin{document}
\author{Alan Frieze\thanks{Research supported in part by NSF grant DMS1952285 } and Wesley Pegden\thanks{Research supported in part by NSF grant DMS1363136 }\\Department of Mathematical Sciences\\Carnegie Mellon University\\Pittsburgh PA 15213}

\date{}
\title{Spanners in randomly weighted graphs: Euclidean case}
\maketitle
\begin{abstract}
Given a connected graph $G=(V,E)$ and a length function $\ell:E\to {\mathbb R}$ we let $d_{v,w}$ denote the shortest distance between vertex $v$ and vertex $w$. A $t$-spanner is a subset $E'\subseteq E$ such that if $d'_{v,w}$ denotes shortest distances in the subgraph $G'=(V,E')$ then $d'_{v,w}\leq t d_{v,w}$ for all $v,w\in V$. We study the size of spanners in the following scenario: we consider a random embedding $\cX_p$ of $G_{n,p}$ into the unit square with Euclidean edge lengths. For $\e>0$ constant, we prove the existence w.h.p. of $(1+\e)$-spanners for $\cX_p$ that have $O_\e(n)$ edges. These spanners can be constructed in $O_\e(n^2\log n)$ time. (We will use $O_\e$ to indicate that the hidden constant depends on $\e$.) There are constraints on $p$ preventing it going to zero too quickly.
\end{abstract}
\section{Introduction}
Given a connected graph $G=(V,E)$ and a length function $\ell:E\to \reals$ we let $d_{v,w}$ denote the shortest distance between vertex $v$ and vertex $w$. A $t$-spanner is a subset $E'\subseteq E$ such that if $d'_{v,w}$ denotes shortest distances in the subgraph $G'=(V,E')$ then $d'_{v,w}\leq t d_{v,w}$ for all $v,w\in V$. We say that the {\em stretch} of $E'$ is at most $t$. In general, the closer $t$ is to one, the larger we need $E'$ to be relative to $E$. Spanners have theoretical and practical applications in various network design problems. For a recent survey on this topic see Ahmed et al \cite{Aetal}. Work in this area has in the main been restricted to the analysis of the worst-case properties of spanners. In this note, we assume that edge lengths are random variables and do a probabilistic analysis.

We consider the case where $\ell_{i,j}=|X_i-X_j|$, where $\cX=\set{X_1,X_2,\ldots,X_n}$ are $n$ randomly chosen points from $[0,1]^2$. The case where the $n$ points are arbitrarily chosen is the subject of the book \cite{NS} by Narasimham and Smid. Section 15.1.2 of this book considers the random model where all $\binom{n}{2}$ edges between points are available. We denote this mode by $\cX_1$.  In this paper we consider a model where only a specified subgraph of the possible edges are available.  In particular,  we assume that edges exist between the points in $\cX$, independently with probability $p$. We denote this model by $\cX_p$. It constitutes a random embedding of the random graph $G_{n,p}$ into $[0,1]^2$. In the open problem session of CCCG 2009 \cite{CCCG}, O'Rourke asked the following question: for what values of $p$ is it true that w.h.p. $\cX_p$ is a $t$-spanner for $\cX_1$, where $t=O(1)$. Mehrabian and Wormald \cite{MW} showed that there is no choice of $p$ with this property. Frieze and Pegden \cite{FP} proved a related negative result and also considered the increase in the shortest path length when going from $\cX_1$ to $\cX_p$, 

Now $d_{i,j}=|X_i-X_j|$ when $\set{i,j}\in \cX_p$ implies that with probability one, a 1-spanner contains all $\approx\binom{n}{2}p$ edges. We prove the following: We write $O_{\e,\th}(\cdot)$ if the hidden constant in the big O notation depends on $\e,\th$. At the moment, in some places, these constants can grow rather fast, for example the dependence on $\e$ is only bounded by $\e^{-O(1/\e)}$.
\begin{theorem}\label{th2}
Suppose that the edges of $\cX_p$ are given their Euclidean length. Let $\e,\th>0$ be arbitrary fixed constants. We describe the construction of a $(1+\e)$-spanner $E_\e$ for $\cX_p$. 
\begin{enumerate}[(a)]
\item If  $np^{1+\th}\to\infty$ then $\E(|E_\e|)=O_{\e,\th}(p^{-\th}n)$.
\item If $\frac{1}{p\log 1/p}=o(\log^{1/2}n)$ then $|E_\e|\leq \E(|E_\e|)+O(n)$ w.h.p.
\end{enumerate}
\end{theorem}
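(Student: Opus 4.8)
\medskip
\noindent\textbf{Proof proposal.} The plan is to assemble $E_\e$ from a layer of very short edges together with, at each of a geometric family of scales, an inter-cell $\Theta$-graph structure, thereby simulating the classical greedy cone-routing through the sparse graph $\cX_p$. Fix an angle $\theta=\theta(\e)>0$ small enough that the ordinary $\Theta$-graph with $O(1/\theta)$ cones --- join each vertex to the closest other vertex in each of its $O(1/\theta)$ angular cones --- is a $(1+\e/4)$-spanner of the complete Euclidean graph on any point set. Put into $E_\e$ every edge of $\cX_p$ of Euclidean length at most $R:=C_1/\sqrt{n\theta p}$, a set $B$ with $\E|B|\le\binom n2\pi R^2 p=O_\e(n)$; the role of $R$ is that $B$ already handles all pairs $v,w$ with $|X_v-X_w|\le R$, since a shortest $\cX_p$-path between two such points uses no edge of length much more than $R$ (a long edge cannot shorten a short path) and so can be rerouted through $B$ and the locally dense point cloud with loss at most $1+\e/4$. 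Next, for $s$ ranging over $R,2R,4R,\dots$, tile $[0,1]^2$ by cells of side $s$; a cell is \emph{usable} when its expected population $\mu_s:=ns^2$ is large enough --- roughly, $\mu_s p$ at least a suitable logarithm --- that \whp\ the subgraph of $\cX_p$ induced on the cell and a thin halo is connected with small diameter, and \whp\ every ``cone-neighbour'' pair of scale-$s$ cells (each carrying $\Theta(\mu_s^2)$ candidate edges) is joined by some edge of $\cX_p$. At every usable scale $s$, and for every cell $c$ and cone direction, add one $\cX_p$-edge joining $c$ to the nearest cell in that cone. This adds $O(1/(\theta s^2))$ edges at scale $s$, dominated by the finest usable scale $s_{\min}$; with $s_{\min}^{-2}=O(np)$ up to logarithmic factors, the total over all scales is $O_{\e,\th}(np)=O_{\e,\th}(p^{-\th}n)$, since $p\le p^{-\th}$.

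For the stretch, given $v,w$ take a shortest $\cX_p$-path $P$. If $|X_v-X_w|\le R$ use $B$ as above. Otherwise let $s$ be the coarsest usable scale at most $\e\,|X_v-X_w|/K$: route at the level of scale-$s$ cells from $c_v$ to $c_w$ using the scale-$s$ cone-structure, which loses only a factor $1+\e/4$ against $P$ by the usual $\Theta$-graph analysis applied to cell centres; the ``detours'' from $v$ (resp.\ $w$) to the representative point of $c_v$ (resp.\ $c_w$) have length $O(s)=O(\e|X_v-X_w|)$ and are realised inside the cell along a connected, small-diameter piece of $\cX_p$ --- which is exactly why we insisted $\mu_s p$ be logarithmically large --- and are themselves handled recursively by descending through finer scales, bottoming out in $B$. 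Composing the $O(1/\e)$-deep recursion's factors of $1+\e/4$ and choosing $\theta$, $C_1$, $K$ and the halo width appropriately keeps the overall stretch at most $1+\e$; this $O(1/\e)$ recursion depth is also what produces the stated $\e^{-O(1/\e)}$ dependence in the edge count.

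For part (a): $\E|B|=O_\e(n)$ as noted; the inter-cell structure over all usable scales contributes $O_{\e,\th}(np)=O_{\e,\th}(p^{-\th}n)$; and the recursive detours contribute, per level, a multiplicative $O(1/\theta)=O_\e(1)$, over $O(1/\e)$ levels, for a grand total of $\e^{-O(1/\e)}p^{-\th}n=O_{\e,\th}(p^{-\th}n)$ in expectation. The hypothesis $np^{1+\th}\to\infty$ is used precisely to guarantee that the finest scale one must reach is still coarse enough that $\mu_{s_{\min}}=\Omega_{\e,\th}(p^{\th})$, equivalently $s_{\min}^{-2}=O_{\e,\th}(p^{-\th}n)$. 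I expect the genuinely hard part of the whole argument to be this base/finest-scale analysis: showing that on a point set only barely dense enough for the relevant local connectivity one can route with stretch $1+\e/4$ through $\cX_p$ while spending only $O_{\e,\th}(p^{-\th})$ edges per point --- exactly where the Euclidean $\Theta$-graph bound is unavailable because the needed edges may be absent, and where the slack $\th$ (traded off against how small $p$ may be) is consumed.

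For part (b), one writes $|E_\e|=\sum_c Z_c$, summing over the cells $c$ at the finest usable scale, where $Z_c$ collects the edges of $E_\e$ ``charged'' to $c$: those of its recursive internal spanner, its outgoing cone-edges at all scales, and the edges of $B$ with lexicographically smaller endpoint in $c$. Each $Z_c$ is measurable with respect to only the positions of, and the $\cX_p$-edges among, points in the $O_\theta(1)$ cells reachable from $c$ through the various cone-structures together with their halos; hence $Z_c$ and $Z_{c'}$ are independent once $c,c'$ are more than a bounded cell-distance apart. Colouring the finest-scale cells with $O_\theta(1)$ colours so that same-coloured cells are mutually far makes the $Z_c$ independent within each colour class, and a Bernstein-type bound in each class --- using the crude deterministic bound on $Z_c$ together with a sub-exponential tail for $Z_c$ coming from the rare ``local repairs'' the construction performs --- then gives $\big||E_\e|-\E|E_\e|\big|=O(n)$ with probability $1-o(1)$. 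The quantitative hypothesis $\frac1{p\log(1/p)}=o(\log^{1/2}n)$ is exactly what is needed for the error in this last step to be $O(n)$ rather than larger: it controls the finest usable scale, hence both the size of a single local repair and the number of repairs relative to the number of cells, so that the independent-within-a-colour fluctuations sum to $O(n)$. The delicate points are (i) verifying the $O_\theta(1)$-locality of each $Z_c$, so the colouring step is legitimate, and (ii) extracting the right tail estimate for $Z_c$ from the hypothesis on $p$.
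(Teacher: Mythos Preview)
Your construction is genuinely different from the paper's, and it has a real gap in the stretch analysis. The paper does \emph{not} use a multi-scale cell hierarchy. Its $E_\e=E_1\cup E_2\cup E_3\cup E_4$ consists of: the ordinary Yao graph $E_2$ on $\cX_p$ (for each point $A$ and each of the $O(1/\e)$ cones, the edge to the nearest $\cX_p$-neighbour $Y(i,A)$ in that cone); all edges $E_1$ of length at most $r_\e\approx (np^{1+\th})^{-1/2}$; and, crucially, the edges of the \emph{actual shortest $\cX_p$-paths} $P_{A,B}$ for every ``bad'' pair --- those with $d_{A,B}\ge(1+\e)|A-B|$ (giving $E_3$) or with $|A-Y(i_{A,B},A)|\ge\e|A-B|$ while $|A-B|\in[r_\e,R_\e]$ (giving $E_4$). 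Routing is greedy Yao stepping, falling back to the stored $P_{Z_j,B}$ whenever the Yao step would be too long. The technical core is $\E|E_3|=O_{\th,\e}(n/p^\th)$: one bounds $\Pr(d_{A,B}\ge(1+K\e)r)$ by Janson's inequality applied to paths $A,S_1,\dots,S_\gamma,B$ through $\gamma=\lceil 1+\th^{-1}\rceil$ thin rhombi along $AB$, and couples this with a count (via ellipse containment and a square-root-uniform domination lemma) of $k$-edge induced paths of length in $[(1+K\e)r,(1+(K+1)\e)r]$. The parameter $\th$ enters exactly through $\gamma$: one needs $(\alpha n p)^\gamma p\to\infty$ with $\alpha\approx r^2\ge r_\e^2=M/(np^{1+\th})$, which forces $\gamma>1/\th$ and is why $r_\e$ carries the extra $p^{-\th}$.

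The gap in your scheme is the intermediate intra-cell routing. When you route at scale $s$ through cells $c_v=c_0,c_1,\dots,c_m=c_w$, the inter-cell edge from $c_i$ to $c_{i+1}$ lands at some $y_i\in c_{i+1}$ and the next inter-cell edge leaves from some unrelated $x_{i+1}\in c_{i+1}$; the detour $y_i\to x_{i+1}$ has length $\Theta(s)$, and with $m=\Theta(|X_v-X_w|/s)$ hops the accumulated intra-cell cost is $\Theta(|X_v-X_w|)$, not $O(\e|X_v-X_w|)$. You account only for the two endpoint detours. Recursing on every intra-cell segment gives a recurrence of the shape $L(d)\le C\cdot d+\Theta(d/s)\cdot L(\Theta(s))$, which yields $L(d)=O(d)$ but not $L(d)\le(1+\e)d$; the ``$\Theta$-graph on cell centres'' bound does not transfer to edges whose endpoints are arbitrary within cells. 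A single-edge-per-well-separated-pair construction avoids this, but that is not what you described. Separately, the range $R\le|X_v-X_w|\lesssim s_{\min}/\e$ (where $s_{\min}\approx(\log n/(np))^{1/2}\gg R$) is handled by neither $B$ nor any usable scale, and you acknowledge this finest-scale analysis as missing; in the paper this is precisely the regime covered by $E_3,E_4$ and the rhombi/Janson machinery, for which your outline offers no substitute.

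For part~(b) the paper also proceeds differently: it first restricts to induced paths of at most $\kappa_1=O(p^{-1}\log^{1/2}n)$ edges (the factor $(1-p)^{k(k-1)/2}$ kills longer ones), grids $[0,1]^2$ at scale $R_\e$ so each cell has $\le\rho_0=O(nR_\e^2)$ points w.h.p., and applies McDiarmid's inequality with one coordinate per cell and per adjacent-cell-pair. The Lipschitz constant is at most $(9\rho_0)^{\kappa_1}\cdot\kappa_1$, and the hypothesis $\tfrac{1}{p\log(1/p)}=o(\log^{1/2}n)$ is exactly what makes this $n^{o(1)}$.
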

The definition of $E_\e$ is given below in \eqref{Eeps}. On the other hand,
\begin{theorem}\label{th2a}
Suppose that the edges of $\cX_p$ are given their Euclidean length. Let $\e>0$ be an arbitrary fixed constant. If $np^2\to\infty$ then w.h.p. any $(1+\e)$-spanner for $\cX_p$ requires $\Omega(\e^{-1/2}n)$ edges.
\end{theorem}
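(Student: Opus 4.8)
The plan is to produce $\Omega(\e^{-1/2}n)$ edges of $\cX_p$ that must belong to \emph{every} $(1+\e)$-spanner, and then to show this number is concentrated. Recall that $\cX_p=G_{n,p}$ and that $np^2\to\infty$ forces $np\ge np^2\to\infty$.

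\textbf{Forced edges.} For $\{v,w\}\in E(\cX_p)$ let $\cE_{v,w}=\{y\in\reals^2:\,|X_v-y|+|y-X_w|\le(1+\e)|X_v-X_w|\}$ be the solid ellipse with foci $X_v,X_w$; it is a thin sliver of width $\Theta(\sqrt\e\,|X_v-X_w|)$ around the segment $X_vX_w$, so $\area(\cE_{v,w})=\Theta\!\left(\sqrt\e\,|X_v-X_w|^2\right)$. Call $\{v,w\}$ \emph{forced} if $w$ is the only $\cX_p$-neighbour of $v$ lying in $\cE_{v,w}$. A forced edge lies in every $(1+\e)$-spanner $E'$: since $d_{v,w}=|X_v-X_w|$, if $\{v,w\}\notin E'$ there is a $v$--$w$ path $P$ in $E'$ with at least two edges and $\operatorname{length}(P)\le(1+\e)|X_v-X_w|$; its second vertex $u$ satisfies $\{v,u\}\in E(\cX_p)$ and $u\ne w$, and $\operatorname{length}(P)\ge|X_v-X_u|+|X_u-X_w|$, so $u$ is an $\cX_p$-neighbour of $v$ in $\cE_{v,w}\setminus\{w\}$, a contradiction. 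Hence it suffices to lower bound the number of forced edges.

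\textbf{The critical scale and the first moment.} Set $r:=(np)^{-1/2}\e^{-1/4}$; since $np\to\infty$ and $\e$ is a fixed constant (we may take $\e<1$, the claim being weaker, and implied by connectivity of $\cX_p$, otherwise) we have $r\to0$ and $r<1$. Fix an ordered pair $(v,w)$ with $|X_v-X_w|=s\le r$ and condition on all point positions and on $\{v,w\}\in E(\cX_p)$. For $j\ne v,w$ the events ``$X_j\in\cE_{v,w}$ and $\{v,j\}\in E(\cX_p)$'' are independent with probability $\le\area(\cE_{v,w})\,p=O(\sqrt\e\,s^2p)$, so
\[
\Pr[\{v,w\}\ \text{forced}\mid\cdot]\ \ge\ \bigl(1-O(\sqrt\e\,s^2p)\bigr)^{\,n-2}\ \ge\ e^{-O(np\sqrt\e\,r^2)}\ =\ e^{-O(1)},
\]
a constant not depending on $\e,p,n$, because the $\sqrt\e$ from the ellipse cancels the $\e^{-1/2}$ hidden in $np\sqrt\e\,r^2=\Theta(1)$. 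On the other hand, for $v$ at distance $\ge2r$ from $\partial[0,1]^2$ — a $1-o(1)$ fraction of all vertices — we have $\E\bigl[\#\{\cX_p\text{-neighbours of }v\text{ within distance }r\}\bigr]=\Theta(npr^2)=\Theta(\e^{-1/2})$. Writing $Z$ for the number of ordered pairs $(v,w)$ that are forced with $|X_v-X_w|\le r$ (so that the number of forced edges is $\ge Z/2$), we conclude $\E Z=\Omega(\e^{-1/2}n)$.

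\textbf{Concentration.} Write $Z=\sum_{(v,w)}Y_{v,w}$. Each indicator $Y_{v,w}$ is a function only of $X_v,X_w$, the points of $\cX$ in the disc $B(X_v,2r)$, and the $\cX_p$-edges at $v$ to those points; it is thus ``local'' of range $O(r)$ with $r\to0$, and this locality (made exact by Poissonising $\cX$) drives the bound on $\operatorname{Var}Z=\sum\operatorname{Cov}(Y_{v,w},Y_{v',w'})$. The diagonal contributes $\sum_{(v,w)}\operatorname{Var}Y_{v,w}\le\E Z=o((\E Z)^2)$. For the off-diagonal terms: after conditioning on the positions, $Y_{v,w}$ and $Y_{v',w'}$ depend on disjoint sets of $G_{n,p}$-edge-indicators — hence have zero conditional covariance — unless the thin ellipses $\cE_{v,w},\cE_{v',w'}$ share a point of $\cX$ or nearly coincide in direction, which forces the relevant points to lie within $O(r)$ of one another; such a coincidence has probability $O_\e(r^6)$ for a generic $4$-tuple of vertices and each shared edge-indicator costs a further factor $\le p$, so each case contributes at most $O_\e(p^2 r^6)$ per $4$-tuple and hence $o((\E Z)^2)$ in total (the hypothesis $np^2\to\infty$, which in particular gives $np\to\infty$, is comfortably enough). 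By Chebyshev, $Z\ge\tfrac12\E Z=\Omega(\e^{-1/2}n)$ \whp, so every $(1+\e)$-spanner of $\cX_p$ has at least this many edges \whp.

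\textbf{Main obstacle.} The geometry (the forcing criterion, the ellipse area) and the first-moment computation are routine, and the scale $r$ is dictated by the $\sqrt\e$-cancellation above. The bulk of the work is the second-moment estimate: one must organise the covariance sum so as to genuinely exploit the thinness of the ellipses — two of the local indicators $Y_{v,w},Y_{v',w'}$ share, after conditioning on positions, only $O(1)$ $G_{n,p}$-edge-indicators unless the underlying points cluster within $O(r)$ — and must check, case by case according to how the four vertices $v,w,v',w'$ coincide or are spatially linked, that every contribution is of lower order than $(\E Z)^2$, so that the hypothesis $np^2\to\infty$ suffices for concentration.
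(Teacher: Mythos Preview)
Your approach is the paper's: isolate edges that every $(1+\e)$-spanner must contain via the ellipse with foci at the endpoints, compute the first moment, then apply Chebyshev. The paper calls such edges \emph{lonely} (no point of $\cX$ in the ellipse is adjacent to both endpoints), a two-sided variant of your one-sided ``forced'' condition; it also integrates over all $r$ rather than fixing the single scale $r=(np)^{-1/2}\e^{-1/4}$, which turns the first moment into a one-line substitution yielding $\Omega(n/\psi)$ with $\psi=\Theta(\e^{1/2})$.

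Where the paper is simpler is exactly the part you flag as the main obstacle. Instead of the covariance case analysis you outline, the paper first restricts to lonely edges of length at most $\rho_\e=\Theta\bigl((\log n/(np\sqrt\e))^{1/2}\bigr)$ (longer edges are lonely with probability $o(n^{-10})$), then conditions on the high-probability event $\cR$ that every vertex has $O(\e^{-1/2}\log n)$ $\cX_p$-neighbours within distance $\rho_\e$. Under $\cR$, any fixed edge $AB$ has only $O(\e^{-1}\log^2 n)$ edges $CD$ whose ellipse meets $\mathrm{ellipse}(A,B)$; for nonintersecting ellipses the loneliness events are conditionally independent, so $\E(Z^2\mid\cR)\le O(\e^{-1}\log^2 n)\,\E Z+(1+o(1))(\E Z)^2$ and Chebyshev finishes immediately. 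This bounded-local-degree conditioning replaces your vertex-coincidence casework entirely and is worth adopting.
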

\begin{Remark}
We stress that we describe a $(1+\e)$-spanner for $\cX_p$ and not for $\cX_1$. The results of  \cite{MW} and \cite{FP} rule out $O(1)$-spanners for $\cX_1$ that only use edges of $\cX_p$. This is because there will w.h.p. be pairs of points that are close together in Euclidean distance, but relatively far apart in $\cX_p$.
\end{Remark}
\begin{Remark}
We have assumed in Theorem \ref{th2} that $np^{1+\th}\to\infty$. If we were to allow $np^{1+\th}=o(1)$ then we would find that $np^{-\th}\gg n^2p$ and so the claimed size of our spanner is more than the likely number of edges in $\cX_p$. 
\end{Remark}
\begin{Remark}
The constant $\th$ is an artifact of our proof and we conjecture that it can be removed so that w.h.p. there is a $(1+\e)$-spanner of size $O_\e(n)$.
\end{Remark}
We note that when points are placed arbitrarily and {\em all pairs of points} are connected by an edge then the so-called $\Theta$-graph (defined below) produces a $(1+\e)$-spanner with $O(n/\e)$ edges. See Theorem 4.1.5 of \cite{NS}.

The argument we present for Theorem \ref{th2} can be easily adapted to deal with random geometric graphs $G_{\cX,r}$ for sufficiently large radius $r$. Here we generate $\cX$ as in Theorem \ref{th2} and now we join two vertices/points $X,Y$ by an edge if $|X-Y|\leq r$. See Penrose \cite{Pen} for an early book on this model. 
\begin{theorem}\label{th3}
If $r^2\gg \frac{\log n}{n}$ then w.h.p. there is a $(1+\e)$-spanner using $O(n\e^{-2})$ edges.
\end{theorem}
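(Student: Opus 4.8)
The plan is to adapt the classical $\Theta$-graph construction (Theorem 4.1.5 of \cite{NS}), which produces a $(1+\e)$-spanner of the complete Euclidean graph on $\cX$ with $O(n/\e)$ edges, to $G_{\cX,r}$, where only pairs of points at Euclidean distance at most $r$ are adjacent. Pick $k=O(1/\e)$ large enough that $\frac{1}{\cos\a-\sin\a}\le 1+\e$, where $\a=2\pi/k$, and for each $v\in\cX$ partition the plane around $X_v$ into $k$ cones $C^v_1,\dots,C^v_k$ of angular width $\a$. For each $v$ and each $i$, among the points $w\in\cX$ with $X_w-X_v\in C^v_i$ \emph{and} $|X_w-X_v|\le r$, choose one minimizing the length of the projection of $X_w-X_v$ onto the bisector of $C^v_i$ (breaking ties arbitrarily), if any such $w$ exists, and place the edge $\set{v,w}$ into $E'$. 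Then $|E'|\le kn=O(n/\e)$, which is in particular $O(n\e^{-2})$. Since $r^2\gg\frac{\log n}{n}$ lies well above the connectivity threshold, $G_{\cX,r}$ is connected w.h.p.\ \cite{Pen}, so every $d_{v,w}$ is finite; condition on this event.

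It remains to show $d'_{v,t}\le (1+\e)d_{v,t}$ for all $v,t\in\cX$, where $d'$ denotes shortest path distance in $(\cX,E')$. I would first treat pairs with $|X_v-X_t|\le r$ (so that $d_{v,t}=|X_v-X_t|$) by running the usual $\Theta$-walk from $v$ towards $t$: set $v_0=v$, and from $v_i\ne t$ move along the $E'$-edge leaving $v_i$ in the cone containing $X_t-X_{v_i}$. The one new point to check is that this walk never traverses an edge of length exceeding $r$: inductively $|X_{v_i}-X_t|\le|X_v-X_t|\le r$, so $t$ itself is an admissible candidate for the relevant cone of $v_i$; hence the required edge exists and lies in $E'$, and its endpoint $v_{i+1}$ — being a projection-minimizer over a candidate set that contains $t$ — satisfies the standard estimate $|X_{v_{i+1}}-X_t|\le |X_{v_i}-X_t|-(\cos\a-\sin\a)\,|X_{v_i}-X_{v_{i+1}}|$. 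Since $\cos\a-\sin\a>0$, the distance to $t$ strictly decreases at every step, so the walk terminates at $t$ after finitely many steps, and telescoping the estimate gives $d'_{v,t}\le\sum_i|X_{v_i}-X_{v_{i+1}}|\le\frac{|X_v-X_t|}{\cos\a-\sin\a}\le(1+\e)|X_v-X_t|$.

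For an arbitrary pair $v,t\in\cX$ I would fix a shortest path $v=u_0,u_1,\dots,u_m=t$ in $G_{\cX,r}$, so that $d_{v,t}=\sum_{j=1}^m|X_{u_{j-1}}-X_{u_j}|$ with each consecutive pair at distance at most $r$. Concatenating the $E'$-paths supplied by the previous step gives
\[
d'_{v,t}\le\sum_{j=1}^m d'_{u_{j-1},u_j}\le(1+\e)\sum_{j=1}^m|X_{u_{j-1}}-X_{u_j}|=(1+\e)\,d_{v,t},
\]
which is the required stretch bound.

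The substance here is the textbook $\Theta$-graph analysis; the only genuinely new ingredient is the observation that capping the candidate set at Euclidean distance $r$ does not obstruct the greedy walk, because within each sub-walk the running target stays within distance $r$ of the current vertex. Accordingly, I expect the only (mild) obstacles to be routine bookkeeping: verifying that $\a=2\pi/k=\Theta(\e)$ indeed forces $\frac{1}{\cos\a-\sin\a}\le 1+\e$, and handling cones near $\partial[0,1]^2$ that contain no point of $\cX$ (these are simply omitted from $E'$ and are never needed by a walk, since the cone used at every step provably contains the walk's target). Alternatively, one could re-run the proof of Theorem \ref{th2} essentially verbatim, replacing ``edge of $\cX_p$'' by ``pair of points within distance $r$'' throughout, and using the fact that $r^2\gg\log n/n$ makes $\cX$ dense at scale $r$ (every disc of radius $\Theta(r)$ contains $\Omega(\log n)$ points of $\cX$ w.h.p.) in place of the probabilistic edge-existence estimates used there.
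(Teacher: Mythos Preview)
Your proposal is correct and shares the paper's high-level approach: build a Yao/$\Theta$-graph with $O(1/\e)$ cones restricted to edges of $G_{\cX,r}$, and use the greedy cone walk. The paper's proof is a two-line sketch: it invokes Chernoff (using $r^2\gg\log n/n$) to show that w.h.p.\ every cone $K(X,i)$ intersected with $B(X,r)$ contains a point of $\cX$, and then cites Lemma~\ref{sp}. Your treatment is more careful and in fact more elementary: you note that during a walk toward a target $t$ with $|X_v-X_t|\le r$, the running distance $|X_{v_i}-X_t|$ is nonincreasing, so $t$ itself remains an admissible candidate in the current cone at every step; hence the standard $\Theta$-graph inequality applies and the walk never needs an edge longer than $r$. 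This makes the Chernoff step unnecessary for the stretch bound (you only use $r^2\gg\log n/n$ for connectivity, and even that is dispensable since the inequality $d'_{v,t}\le(1+\e)d_{v,t}$ is vacuous when $d_{v,t}=\infty$). As a bonus, your argument directly yields the sharper $O(n/\e)$ edge count rather than the stated $O(n\e^{-2})$.
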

We note finally that Frieze and Pegden \cite{FP1} have also considered the case where edge lengths are independently exponential mean one. The results there are much tighter.

\section{Lower bound: the proof of Theorem \ref{th2a}}
It is quite easy to prove the lower bound in Theorem \ref{th2a}., so we begin with this. Given an edge $\set{A,B}\in E(\cX_p)$ we let $ellipse(A,B)$ be the ellipse with foci $A,B$ defined by $|X-A|+|X-B|\leq (1+\e)r$.  The edge $\set{A,B}$ is {\em lonely} if its length is $r$ and there is no $X\in \cX\cap ellipse(A,B)$ such that $\set{A,X},\set{B,X}$ are edges of $\cX_p$. Any $(1+\e)$-spanner must contain all of the lonely edges. Now $ellipse(A,B)$ has axes of size $a=(1+\e)r,b=(2\e+\e^2)^{1/2}r $ and so its volume is $\psi r^2$ where $\psi=\p(1+\e)(2\e+\e^2)^{1/2}/4$. By concentrating on points that are at least 0.1 from the boundary $\pD$ of $D=[0,1]^2$, we see that the expected number of lonely edges is at least 
\beq{alone}{
(0.64-o(1))\binom{n}{2}p\int_{r=0}^{0.8\sqrt{2}}\brac{1-\psi r^2p}^{n}\cdot2\p rdr\geq \frac{n^2\p}{2\psi}\int_{s=0}^{ \psi p}(1-s)^nds \geq  \frac{n\p}{3\psi},
}
where we have used $(1-p)^n=o(1)$.

Concentration around the mean follows will follow from the Chebyshev inequality. In preparation for this, observe that if $r\geq \r_\e=(20\log n/(np\psi))^{1/2}$ then $\brac{1-\psi pr^2}^{n}=o(n^{-10})$ and so going back to the first integral in \eqref{alone} we see that we can concentrate on lonely edges with $r\leq \r_\e$. Next consider the event $\cR$ that for each $A\in\cX$ there are at most $100\psi^{-1}\log n$ $\cX_p$ neighbors $B$ such that $|A-B|\leq \r_\e$. For a given $A$, the number of such close neighbors is distributed as a binomial with mean at most $20\p\psi^{-1}\log n$. So the Chernoff bounds imply that $\cR$ occurs with probability $1-o(n^{-10})$. So we let $Z$ denote the number of lonely edges $AB$ such that $|A-B|\leq \r_\e$ and observe that $\E(Z)=\Omega(n/\e^{1/2}p)$.

Observe also that given an edge $AB$ there are at most $O(\e^{-1}\log^2n)$ edges $CD$ for which $ellipse(A,B)\cap ellipse(C,D)\neq \emptyset$, assuming the occurrence of $\cR$. Write $AB\sim CD$ to denote a non-empty intersection of ellipses. Thus, if $\cL_{A,B}$ is the event that $AB$ is lonely, then
\mults{
\E(Z^2\mid\cR)\leq \sum_{AB}\sum_{CD\sim AB}\Pr(\cL_{A,B}\mid\cR)+\sum_{AB}\sum_{CD\not\sim AB}\Pr(\cL_{A,B},\cL_{C,D}\mid \cR)\\
\leq O(\E(Z)\e^{-1}\log^2n)+(1+o(1))\E(Z)^2=(1+o(1))\E(Z)^2.
}
The Chebyshev inequality implies that $Z$ is concentrated around its mean. This completes the proof of the lower bound in Theorem \ref{th2}.
\section{Upper bound: the proof of Theorem \ref{th2}}\label{th2p}
Suppose that $0<\e\ll 1$. It is perhaps instructive to consider the case where $p=1$ i.e. where $K_n$ is being embedded. In this case there are known, simple algorithms for finding a $(1+\e)$-spanner. For each $A\in\cX$ we define $\t$ cones $K_p(i,A),0\leq i<\t$ with apex $A$ and whose boundary rays make angles $i\e$ and $(i+1)\e$ with the horizontal. We then let $Y(i,A)$ denote the closest point in Euclidean distance to $A$ in $K_p(i,A)$ that is adjacent to $A$ in $\cX_p$. We put $Y(i,A)=\bot$ if there is no such $Y$ and let $d_{A,\bot}=\infty$. Also, define $i=i_{A,B}$ by $B\in K_p(i,A)$. When $p=1$, the Yao graph \cite{Yao} consists of the edges $(A,Y(i,A)), 0\leq i<\t,A\in\cX$. 
\begin{Remark}\label{rem0}
It is known that the path $P(A,B)=(Z_0=A,Z_1,\ldots,Z_m=B)$, where $Z_{i+1}=Y(i_{Z_i,B},Z_i)$ has length at most $(\cos\e-\sin\e)^{-1}|A-B|$ and so the Yao graph has stretch factor $1+\e+O(\e^2)$. 
\end{Remark}
When $p<1$, $P(A,B)$ may not exist in $\cX_p$ and we show below how to overcome this problem. 

We should also mention the very similar $\Theta$-graph \cite{NS0}. Here we replace $Y(i,A)$ by the point in $K(i,A)$ whose projection onto the bisector of $K(i,A)$ is closest to $A$. The $\Theta$-graph also has a stretch factor of at most $(\cos\e-\sin\e)^{-1}$.

Let
\beq{defR}{
r_\e=\bfrac{M_{\th,\e}}{np^{1+\th}}^{1/2}\text{ and }R_\e=\bfrac{K_\th\log n}{np^{1+\th}}^{1/2}.
}
where $M_{\th,\e}$ is sufficiently large to justify some inequalities claimed below.

Let
\[
E_1=\set{\set{A,B}\in \cX_p:|A-B|\leq r_{\e}}. 
\]
We have 
\beq{E00}{
\E(|E_1|)\leq \binom{n}{2}\p r_{\e}^2p\leq  \frac{M_{\th,\e} n}{2p^{\th}}
} 
and then we can assert that
\beq{E0}{
|E_1|\leq \frac{M_{\th,\e} n}{p^{\th}}\ w.h.p.
}
using the Chebyshev inequality. Here we can use the fact that the events of the form $\set{|A-B|\leq r_\e}$ are pair-wise independent.

Let
\beq{E2}{
E_2=\set{(A,Y(i,A)):A\in\cX,i\in\set{0,1,\ldots,\t-1}}\text{ so that }|E_2|=O(n/\e).
}
The next two lemmas will discuss the case where $A,B$ are sufficiently distant.
\begin{lemma}\label{far}
If $|A-B|\geq R_\e$ then with probability $1-o(n^{-10})$, $|A-Y|\leq\e|A-B|$, where $Y=Y(i_{A,B},A)$.
\end{lemma}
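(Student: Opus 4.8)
The plan is to exhibit, with the stated probability, an $\cX_p$-neighbour of $A$ lying inside the cone $K=K_p(i_{A,B},A)$ at Euclidean distance at most $\e|A-B|$ from $A$. Since $Y=Y(i_{A,B},A)$ is by definition the \emph{closest} such neighbour, this immediately gives $|A-Y|\le\e|A-B|$. Write $\ell=|A-B|$ and let $S=S(A,B)$ be the circular sector with apex $A$, the same angular extent as $K$ (opening angle $\e$), and radius $\e\ell$; that is, $S=K\cap\{X:|X-A|\le\e\ell\}$. It then suffices to produce one point of $\cX$ in $S$ that is joined to $A$ in $\cX_p$.

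First I would bound $\area(S\cap D)$ from below, where $D=[0,1]^2$. The full sector has area $\tfrac12\e(\e\ell)^2=\tfrac12\e^3\ell^2\ge\tfrac12\e^3R_\e^2=\tfrac{\e^3K_\th\log n}{2np^{1+\th}}$, by the hypothesis $\ell\ge R_\e$. Since $D$ is convex, contains $A$, and the radius $\e\ell\le\e\sqrt2$ is small ($0<\e\ll1$), a constant fraction of $S$ stays inside $D$, say $\area(S\cap D)\ge a_0:=\tfrac14\e^3R_\e^2$, provided $A$ is not pathologically close to $\pD$. This is the only place where any care is needed: because $B$ need only lie \emph{in} $K$ (possibly on its boundary, and possibly itself near $\pD$), one has to check that the useful sector is not almost entirely clipped by a side of the square; for $A$ at distance at least a fixed constant from $\pD$ this is immediate from convexity, and the negligibly many points near $\pD$ are treated separately, exactly as in the lower‑bound argument where one restricts to points at distance $\ge 0.1$ from $\pD$.

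Next I condition on the positions of $A$ and $B$, so that $\ell$ and the cone $K$ — hence $S$ — are fixed. The remaining $n-2$ points of $\cX$ are i.i.d.\ uniform on $D$, and each is adjacent to $A$ in $\cX_p$ independently with probability $p$, independently of its location; hence the number $N$ of points of $\cX$ that lie in $S$ and are adjacent to $A$ is distributed as $\mathrm{Bin}(n-2,\,ap)$ with $a=\area(S\cap D)\ge a_0$. Consequently
\[
\Pr(N=0)=(1-ap)^{n-2}\le(1-a_0p)^{n-2}\le\exp(-a_0p(n-2))=n^{-(1+o(1))\e^3K_\th/4},
\]
using $p\le1$ in the last step, and this is $o(n^{-10})$ once $K_\th$ in \eqref{defR} is chosen large enough in terms of $\e$. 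On the event $N\ge1$ there is an $\cX_p$‑neighbour of $A$ in $K$ within distance $\e\ell$ of $A$, so $|A-Y|\le\e\ell=\e|A-B|$, as required. The main — indeed only — obstacle is the geometric area estimate $\area(S\cap D)\ge a_0$; the rest is an elementary area computation together with a single Chernoff bound.
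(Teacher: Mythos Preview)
Your argument is correct and is essentially the paper's own proof: both bound $\Pr(|A-Y|>\e|A-B|)$ by the probability that none of the other points is an $\cX_p$-neighbour of $A$ lying in the sector $K(i_{A,B},A)\cap\{|X-A|\le\e|A-B|\}$, getting $(1-\Theta(\e^3R_\e^2)\,p)^{\,n-O(1)}=n^{-\Omega(\e^3K_\th/p^{\th})}=o(n^{-10})$ for $K_\th$ large. The only difference is the boundary bookkeeping: the paper handles it with a single factor of~$2$ (``the $2$ \ldots\ allows half the cone to be outside $[0,1]^2$''), whereas you discuss it at more length---but note that your suggested fix of ``treating points near $\partial D$ separately, exactly as in the lower-bound argument'' does not transfer directly, since the lemma is needed for \emph{every} pair $A,B$ (it is later used inside a union bound), not merely for most pairs; the paper's one-line justification is at the same level of rigor.
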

\begin{proof}
We have
\[
\Pr(|A-Y|>\e|A-B|)\leq (1-\e\p (\e R_\e)^2p/2)^{n-1}\leq n^{-\e^3\p M_{\th,\e}/3p^{\th}}.
\]
The 2 in the middle expression allows half the cone to be outside $[0,1]^2$.
\end{proof}
\begin{lemma}\label{farx}
If $r\geq R_\e$ then  with probability $1-o(n^{-10})$, $d_{A,B}\leq (1+4\e)|A-B|$.
\end{lemma}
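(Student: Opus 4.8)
The plan is to produce an explicit $A$–$B$ path in $\cX_p$ of length at most $(1+4\e)|A-B|$ by running the greedy ``route towards $B$'' rule of Remark \ref{rem0}, using edges of $E_2$, for as long as we are still far from $B$, and then patching the last short leg by a separate near-scale argument. Fix $A,B$ with $r:=|A-B|\ge R_\e$, put $Z_0=A$, and having chosen $Z_i$ with $|Z_i-B|\ge R_\e$ set $Z_{i+1}=Y(i_{Z_i,B},Z_i)$, the nearest $\cX_p$-neighbour of $Z_i$ in the width-$\e$ cone at $Z_i$ that contains $B$.

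For the ``far'' part of the walk I would fix, once and for all, the event that the conclusion of Lemma \ref{far} holds simultaneously for every ordered pair of points of $\cX$ at Euclidean distance $\ge R_\e$; there are $O(n^2)$ such pairs and Lemma \ref{far}'s failure probability can be made $o(n^{-12})$ by taking the constants in \eqref{defR} large enough, so this event has probability $1-o(n^{-10})$. On it every step is legal ($Z_{i+1}\ne\bot$) and satisfies $|Z_i-Z_{i+1}|\le\e|Z_i-B|\le|Z_i-B|$. Since $Z_{i+1}$ and $B$ lie in a common width-$\e$ cone at $Z_i$, the law of cosines (the computation behind Remark \ref{rem0}) gives $|Z_{i+1}-B|\le|Z_i-B|-(\cos\e-\sin\e)|Z_i-Z_{i+1}|$; in particular $|Z_i-B|$ is strictly decreasing, so the $Z_i$ are distinct and the walk stops after finitely many steps, at the first index $m$ with $|Z_m-B|<R_\e$ (if $Z_m=B$ we are already done). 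Telescoping over $i<m$ yields $\sum_{i<m}|Z_i-Z_{i+1}|\le (r-|Z_m-B|)/(\cos\e-\sin\e)\le(1+2\e)(r-|Z_m-B|)$ for $\e$ small, so the partial path $Z_0\to\cdots\to Z_m$ lives in $\cX_p$ and has length at most $(1+2\e)(r-|Z_m-B|)$. One checks routinely that the walk stays within distance $r$ of $B$, so no genuinely new boundary issue arises; where a cone of some $Z_i$ pokes outside $[0,1]^2$ the factor-$2$ slack already present in Lemma \ref{far} suffices.

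It remains to join $Z_m$ to $B$, where now $|Z_m-B|<R_\e$, and this is the crux. The single Yao graph $E_2$ is genuinely insufficient below scale $R_\e$: a width-$\e$ cone at a vertex, viewed at a scale $\rho\in[r_\e,R_\e)$, contains an $\cX_p$-neighbour only with probability bounded away from $1$, so the greedy rule can stall. What is needed is a ``near'' statement — with probability $1-o(n^{-10})$, for every pair $C,D$ of points of $\cX$ with $|C-D|<R_\e$ one has $d_{C,D}\le(1+2\e)|C-D|+2\e R_\e$ — and, granting it, the final leg costs at most $(1+2\e)|Z_m-B|+2\e R_\e$, so the whole path has length at most $(1+2\e)r+2\e R_\e\le(1+4\e)r$ since $R_\e\le r$, which is the claim.

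I expect establishing this ``near'' statement, uniformly over all $O(n^2)$ pairs and over the whole window of scales $r_\e\le\rho<R_\e$, to be the main obstacle, and it is presumably where $np^{1+\th}\to\infty$ (and, for the concentration half of Theorem \ref{th2}, the side condition on $p$) is really used. For scales $\rho\gtrsim\sqrt{(\log n)/(\e np)}$ a Yao-type descent still goes through essentially verbatim, as cones of radius $\rho$ then carry $\gg\log n$ points of $\cX_p$. For smaller scales, down to $r_\e$ — below which $E_1$ contains every edge and the bound is immediate — the expected occupancy of a single cone is only a large constant, so one must show the vertices whose relevant cone is empty are sparse enough that a stalled greedy walk can always hop to a nearby ``good'' vertex and continue, at the cost of a multiplicative $1+O(\e)$ and a cumulative additive error staying below $2\e R_\e$; quantifying this detour cost, uniformly, is the technical heart of the argument.
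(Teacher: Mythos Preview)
Your approach is substantially different from the paper's, and it leaves a real gap.

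The paper does not run a greedy Yao walk at all. It builds a single three–edge path directly: place two balls of radius $\e r$ centred at the points $X_1,X_2$ that trisect the segment $AB$; let $A_1$ be the $\cX_p$-neighbours of $A$ in the first ball and $A_2$ the $\cX_p$-neighbours of $B$ in the second. A Chernoff bound shows each $A_i$ has size at least $\pi r^2np/10$ with probability $1-o(n^{-10})$, and then the probability that no $\cX_p$-edge runs between $A_1$ and $A_2$ is at most $(1-p)^{(\pi r^2np/10)^2}=o(n^{-10})$. Any such edge gives a path $A\to a_1\to a_2\to B$ of total length at most $(1+4\e)r$. That is the entire argument; there is no ``near'' regime to handle, no union bound over $O(n^2)$ pairs, and no recursion on scales.

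In your proposal the near part is not proved: you reduce to the statement ``for every pair $C,D$ with $|C-D|<R_\e$, $d_{C,D}\le(1+2\e)|C-D|+2\e R_\e$'' and then only sketch, in words, how one might hope to establish it across all scales down to $r_\e$. That is the gap. Moreover, the difficulty you anticipate is self-inflicted. Your own walk guarantees $|Z_m-B|\ge |Z_{m-1}-B|-|Z_{m-1}-Z_m|\ge(1-\e)|Z_{m-1}-B|\ge(1-\e)R_\e$, so the last leg is still essentially at scale $R_\e$; there is no need to descend through ``the whole window of scales $r_\e\le\rho<R_\e$'', and your talk of stalled walks and hopping to good vertices is addressing a problem that does not arise here. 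A single application of the paper's three-hop construction (which works verbatim with $r$ replaced by $(1-\e)R_\e$) would close your gap in one line and give total length at most $(1+2\e)(r-|Z_m-B|)+(1+4\e)|Z_m-B|\le(1+4\e)r$. But once you see that, the entire greedy-walk prelude is superfluous: apply the three-hop construction to $A,B$ directly.
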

\begin{proof}
Let $X_1,X_2$ be points on the line segment $AB$ at distance $|A-B|/3,2|A-B|/3$ from $A$ respectively. Let $B_i,i=1,2$ be the ball of radius $\e r$ centred at $X_i$. Let $A_1$ be the set of $\cX_p$ neighbors of $A$ in $X_1$ and let $A_2$ be the set of $\cX_p$ neighbors of $B$ in $X_2$. $\cE_i,i=1,2$ be the event that $|A_i|\geq \p r^2np/10$. Then the Chernoff bounds imply that 
\[
\Pr(\cE_1\wedge\cE_2)\geq 1-2e^{-\p r^2np/1000}=1-O(n^{-\p M_{\th,\e}/1000p^{\th}}).
\]
Let $\cE_3$ be the event that there is an $\cX_p$ edge between $A_1$ and $A_2$. Then
\[
\Pr(\cE_3\mid \cE_1\wedge\cE_2)\geq 1-\brac{1-p}^{r^4n^2p^2/100}=1-O(n^{-K^2_{\th,\e}/100p^{\th}}).
\]
Finally note that if $\cE_i,i=1,2,3$ all occur then $d_{A,B}\leq (1+4\e)|A-B|$. (4 is trivial and avoids any computation.)
\end{proof}
For $A,B\in\cA$ we let $P_{A,B}$ denote the shortest path between $A,B$ in $\cX_p$ and we let $d_{A,B}$ denote the length of $P_{A,B}$. 

Let 
\beq{Ralf}{
\cB_\e=\set{(A,B):\;d_{A,B}\geq (1+\e)|B-A|\text{ and }r=|A-B|\geq r_\e}
}
and
\[
E_3=\bigcup_{(A,B)\in \cB_{\e}}E(P_{A,B}).
\] 

Let 
\[
\cC_{\e}=\set{(A,B):\;d_{A,B}\leq (1+\e)|B-A|\text{ and }r=|A-B|\in[ r_\e,R_\e]\text{ and }|A-Y|\geq \e|A-B|},
\]
where $Y=Y(i_{A,B},A)$. Let 
\[
E_4=\bigcup_{(A,B)\in \cC_{\e}}E(P_{A,B}).
\] 
We show in Lemmas \ref{longpaths} and \ref{E3} that the expected sizes of the sets $E_3,E_4$ are $O_\e(n)$. Let 
\beq{Eeps}{
E_\e=\bigcup_{i=1}^4E_i.
}
\paragraph{Time:} The construction of $E_\e$ can obviously be done in polynomial time. The most time consuming parts being solving the all pairs shortest path problems defined by $E_3,E_4$. We show below that these sets consist of $O_\e(n)$ edges in expectation. So the expected time to solve these $O(n)$ single source problems via Dijkstra's algorithm is $O_\e(n^2\log n)$, see Fredman and Tarjan \cite{FT}.

For $X,Y\in\cX$ we let $\hd_{X,Y}$ denote the length of the path from $X$ to $Y$ constructed by the following procedure: Given $A,B\in\cX$ where $\set{A,B}\notin E$ we construct a path $A=Z_0,Z_1,\ldots,Z_k=B$ as follows: in the following, $Y_j=Y(i,Z_j)$ for $B\in K(i,Z_j),j\geq 0$.

{\sc Construct:}
\begin{enumerate}[D1]
\item If $\set{Z_j,B}\in E_1$ then  use $P_{Z_j,B}$ to complete the path, otherwise,
\item If $|Z_j-Y_j|>\e|Z_j-B|$ then  use $P_{Z_j,B}$ to complete the path, otherwise,
\item If $d_{Y_j,B}\geq (1+5\e)|Y_j-B|$ then use $P_{Z_j,B}$ to complete the path, otherwise 
\item $Z_{j+1}\gets Y_j$.
\end{enumerate}
\begin{Remark}\label{rem+}
We observe that Lemma \ref{far} implies that with probability $1-o(n^{-10})$ we do not use $P_{Z_j,B}$ for $|Z_j-B|\geq R_\e$. Denote the corresponding event by $\cU$.
\end{Remark}
The next lemma is used to estimate the quality of the path built by {\sc construct}. (We can obviously replace $8\e$ by $\e$ in order to get a $(1+\e)$-spanner.)
\begin{lemma}\label{sp}
{\sc construct} produces a path of length at most $(1+7\e)d_{A,B}$.
\end{lemma}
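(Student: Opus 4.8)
The plan is to analyze the path $A = Z_0, Z_1, \ldots, Z_k = B$ produced by {\sc construct} by tracking how the quantity $|Z_j - B|$ decreases along the path and bounding the total length accumulated. Recall that as long as we reach step D4 at stage $j$, we have $|Z_j - Y_j| \le \e|Z_j - B|$ (from D2 failing) and $d_{Y_j,B} \le (1+5\e)|Y_j - B|$ (from D3 failing), and the path simply moves to $Z_{j+1} = Y_j$, contributing a single edge $\{Z_j, Y_j\}$ of length $|Z_j - Y_j| \le \e|Z_j-B|$. When we exit at D1, D2, or D3 at some stage $j$, we append the shortest path $P_{Z_j, B}$, contributing $d_{Z_j, B}$.

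First I would establish the geometric contraction. Since $Y_j = Y(i,Z_j)$ with $B \in K(i,Z_j)$ and the cone has half-angle $\e$, the standard Yao/$\Theta$-graph estimate (as in Remark \ref{rem0}) gives that $|Y_j - B| \le |Z_j - B| - (\cos\e - \sin\e)|Z_j - Y_j|$, hence in particular $|Z_{j+1} - B| = |Y_j - B| \le |Z_j - B|$, and more precisely that summing the edge lengths $|Z_j - Y_j|$ over all D4-stages telescopes: $\sum_{j: \text{D4}} (\cos\e - \sin\e)|Z_j - Y_j| \le |Z_0 - B| - |Z_{\text{last}} - B| \le |A - B|$. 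So the total contribution of the D4 edges is at most $(\cos\e - \sin\e)^{-1}|A-B| = (1 + \e + O(\e^2))|A-B| \le (1+2\e)|A-B| \le (1+2\e)d_{A,B}$ for $\e$ small.

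Next I would handle the terminal segment. Let $j^*$ be the stage at which we exit via D1, D2, or D3 (if we never do, then $Z_k = B$ is reached via D4 edges alone and we are already done, since $d_{A,B} \ge |A-B|$). The terminal segment has length $d_{Z_{j^*}, B}$. The key observation is that at stage $j^* - 1$ we reached D4, so $d_{Z_{j^*}, B} = d_{Y_{j^*-1}, B} \le (1+5\e)|Y_{j^*-1} - B| = (1+5\e)|Z_{j^*} - B|$ — this is exactly what D3 failing at stage $j^*-1$ guarantees. (In the boundary case $j^* = 0$, i.e. we exit immediately, the statement is vacuous if $\{A,B\}$ is the kind of pair {\sc construct} is applied to, or else $d_{Z_{j^*},B} = d_{A,B}$ and there is nothing to prove.) Combining with $|Z_{j^*} - B| \le |A-B|$ from the contraction, the terminal segment has length at most $(1+5\e)|A-B| \le (1+5\e)d_{A,B}$... but this naive addition of $(1+2\e)d_{A,B}$ and $(1+5\e)d_{A,B}$ overshoots. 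The fix is to be more careful: the D4 edges used \emph{before} stage $j^*$ only telescope down to $|Z_{j^*} - B|$, not to $0$, so their total length is at most $(\cos\e-\sin\e)^{-1}(|A-B| - |Z_{j^*}-B|)$. Adding the terminal segment, the total is at most
\[
(\cos\e-\sin\e)^{-1}(|A-B| - |Z_{j^*}-B|) + (1+5\e)|Z_{j^*} - B| \le (1+2\e)|A-B| + 3\e|Z_{j^*}-B| \le (1+5\e)|A-B|.
\]
Hmm, that still gives $(1+5\e)$ on $|A-B|$, hence on $d_{A,B}$ — which is within the claimed $(1+7\e)$. Actually $(1+5\e)d_{A,B} \le (1+7\e)d_{A,B}$, so this suffices; I would simply be generous with constants as the authors suggest ("4 is trivial and avoids any computation").

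The main obstacle I anticipate is the case analysis at the \emph{first} step (and, relatedly, ensuring the recursion is well-founded). When we exit via D1 or D2 at stage $j^*$, we have no bound like $d_{Z_{j^*},B} \le (1+5\e)|Z_{j^*}-B|$ from a \emph{previous} D4 stage only if $j^* = 0$; for $j^* \ge 1$ the D3-failure at stage $j^*-1$ still applies to $Y_{j^*-1} = Z_{j^*}$, so we're fine. For $j^* = 0$ — exiting immediately — the path is just $P_{A,B}$ of length exactly $d_{A,B} \le (1+7\e)d_{A,B}$, trivially. Exiting via D3 at stage $j^*$ means $d_{Y_{j^*},B} \ge (1+5\e)|Y_{j^*}-B|$, and we use $P_{Z_{j^*},B}$; again if $j^* \ge 1$ we bound $d_{Z_{j^*},B} \le (1+5\e)|Z_{j^*}-B|$ via the previous D4 stage, and if $j^* = 0$ it is trivial. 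So every case reduces to the telescoping bound plus one clean terminal estimate; I would also remark that finiteness of $k$ follows since $|Z_j - B|$ strictly decreases by at least $(\cos\e - \sin\e)|Z_j-Y_j| > 0$ at each D4 step and the points are in general position, or more simply since no vertex repeats (each step strictly decreases distance to $B$).
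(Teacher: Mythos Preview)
Your proof is correct and follows essentially the same approach as the paper: both telescope the D4 edge lengths via the Yao-type inequality $|Z_{j+1}-Z_j|\le (\cos\e-\sin\e)^{-1}(d_j-d_{j+1})$ and then bound the terminal shortest-path segment using the fact that D3 failed at the previous stage. Your handling of the terminal segment is in fact slightly more direct than the paper's (you use D3 failing at stage $j^*-1$ to bound $d_{Z_{j^*},B}$ immediately, whereas the paper detours through $Z_{k-2}$ with an extra triangle inequality), yielding the marginally better constant $(1+5\e)$ in place of $(1+6\e)(\cos\e-\sin\e)^{-1}\approx 1+7\e$.
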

\begin{proof}
Let $A=Z_0,Z_1,\ldots,Z_k=B$ be the sequence defined by {\sc construct}. If $k=1$ then {\sc construct} uses that path $P_{A,B}$ which has stretch one. Otherwise, let $d_j=|Z_j-B|$ for $0\leq j\leq k$ and observe that it is a monotone decreasing sequence.
Define $\bar Z_{j+1}$ to the point on the segment $Z_jZ_k$ such that $|\bar Z_{j+1}-Z_k|=|Z_{j+1}-Z_k|$.  The assumption that $|Z_j-Z_{j+1}|\leq \e|Z_j-Z_k|$ implies that $\angle Z_{j+1}Z_k\bar Z_{j+1}<\pi/2$, and thus that the ratio   
\beq{Zd}{
\frac{|Z_{j+1}-Z_j|}{d_j-d_{j+1}}
}
can be bounded by considering the case where $\angle Z_{j+1}Z_k\bar Z_{j+1}=\pi/2,$ as it is drawn in Figure \ref{f.right}.

We have in that case that $\sin\e=\frac{d_{j+1}}{|Z_j-Z_{j+1}|}$ and $\cos\e=\frac{d_{j}}{|Z_j-Z_{j+1}|}$, giving $d_j-d_{j+1}=(\cos\e-\sin\e)|Z_j-Z_{j+1}|$. So, if {\sc construct} only uses D4 then the length $L_{A,B}$ of the path constructed satisfies
\[
L_{A,B}=\sum_{j=0}^{k-1}|Z_{j+1}-Z_j|\leq (\cos\e-\sin\e)\sum_{j=1}^k(d_j-d_{j+1})=(\cos\e-\sin\e)|A-B|\leq (\cos\e-\sin\e)d_{A,B}.
\]
Suppose that {\sc construct} uses a path in D1,D2 or D3. If $k=1$ then {\sc construct} uses a shortest path from $A$ to $B$ in $\cX_p$. Assume then that $k\geq 2$. It follows from the above argument that 
\[
\sum_{j=0}^{k-2}|Z_{j+1}-Z_j|\leq (\cos\e-\sin\e)||A-Z_{k-1}|.
\] 
Now,
\[
d_{Z_{k-1},B}\leq |Z_{k-2}-Z_{k-1}|+d_{Z_{k-2},B}\leq \e|Z_{k-2}-B|+(1+5\e)|Z_{k-2}-B|
\]
So,
\begin{align*}
L_{A,B}&\leq(\cos\e-\sin\e)||A-Z_{k-1}|+(1+6\e)|Z_{k-2}-B|\\
&\leq (1+6\e)(|A-Z_{k-2}|+|Z_{k-2}-B|)\\
&\leq (1+6\e)(\cos\e-\sin\e)|A-B|.
\end{align*}
\end{proof}
\begin{figure}[h]
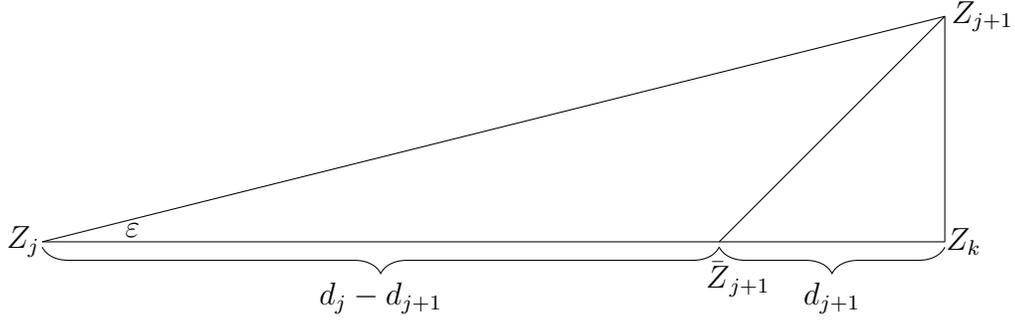

\begin{center}
\pic{
\draw (0,0) -- (12,0);
\draw (0,0) -- (12,3);
\draw (12,0) -- (12,3);
\draw (9,0) -- (12,3);

\draw [decorate,decoration={brace,amplitude=10pt,mirror},yshift=-2pt]
(0,0) -- (9,0) ;

\draw [decorate,decoration={brace,amplitude=10pt,mirror},yshift=-2pt]
(9,0) -- (12,0) ;

\node at (-1/4,0) {$Z_j$};
\node at (12.5,3) {$Z_{j+1}$};
\node at (12.25,0) {$Z_k$};

\node at (9.25,-.5) {$\bar Z_{j+1}$};

\node at (1.2,.15) {$\e$};

\node at (4.5,-.75) {$d_j-d_{j+1}$};
\node at (10.5,-.75) {$d_{j+1}$};
}
\end{center}
\caption{Extreme case for \eqref{Zd}}\label{f.right}
\end{figure}
We argue next that 
\begin{lemma}\label{paths}
The edges of the paths $P_{Z_j,B}$ used in {\sc construct} are contained in $E_1\cup E_3\cup E_4$. Furthermore, only edges of length at most $R_\e$ contribute to $E_3,E_{4}$.
\end{lemma}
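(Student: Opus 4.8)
The plan is to trace through the three places where {\sc construct} inserts a subpath $P_{Z_j,B}$ (steps D1, D2, D3) and to check in each case that the pair $(Z_j,B)$ — or a suitable surrogate — lands in one of the index sets $\cB_\e$ or $\cC_\e$ defining $E_3, E_4$, or that the edge $\set{Z_j,B}$ itself is short enough to be in $E_1$. First I would fix the conditioning on the event $\cU$ of Remark \ref{rem+}, so that we may assume $|Z_j-B| < R_\e$ whenever $P_{Z_j,B}$ is invoked via D2 or D3; together with the trivial bound that every edge of a shortest path $P_{Z_j,B}$ has length at most $d_{Z_j,B}$, and with Lemma \ref{farx} (which under $\cU$ gives $d_{Z_j,B} \le (1+4\e)|Z_j-B| < R_\e$ for such pairs after adjusting constants), this will establish the "furthermore" clause: only edges of length at most $R_\e$ contribute to $E_3$ and $E_4$.

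Next I would handle the three cases. In case D1 we have $\set{Z_j,B}\in E_1$ and $P_{Z_j,B}$ is used to complete the path; here I must argue that every edge of $P_{Z_j,B}$ also lies in $E_1$. Since $\set{Z_j,B}\in E_1$ means $|Z_j-B|\le r_\e$, the edge $\set{Z_j,B}$ is itself a path of length $\le r_\e$ in $\cX_p$, so $d_{Z_j,B}\le r_\e$, and hence every edge on the shortest path $P_{Z_j,B}$ has Euclidean length $\le r_\e$ and belongs to $E_1$. In case D2, $|Z_j-Y_j| > \e|Z_j-B|$ while we are past D1, so $|Z_j - B| > r_\e$; thus $(Z_j,B)$ satisfies the defining condition of $\cC_\e$ provided also $d_{Z_j,B}\le(1+\e)|Z_j-B|$ and $|Z_j-B|\le R_\e$ — the latter from $\cU$. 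If instead $d_{Z_j,B} > (1+\e)|Z_j-B|$ then $(Z_j,B)\in\cB_\e$ and $E(P_{Z_j,B})\seq E_3$. So in D2 the path lands in $E_3\cup E_4$. In case D3, we have $d_{Y_j,B}\ge(1+5\e)|Y_j-B|$, we are past D1 so $|Z_j-B|>r_\e$, and we are past D2 so $|Z_j-Y_j|\le\e|Z_j-B|$; I would argue that $(Z_j,B)$ again falls into $\cB_\e\cup\cC_\e$, splitting on whether $d_{Z_j,B}\le(1+\e)|Z_j-B|$ exactly as in D2, using $\cU$ to control $|Z_j-B|\le R_\e$.

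The main obstacle is the bookkeeping in case D3: the hypothesis triggering D3 is a statement about the pair $(Y_j,B)$, not $(Z_j,B)$, so I must be careful that placing $E(P_{Z_j,B})$ into $E_3$ or $E_4$ is justified by a property of $(Z_j,B)$ and not merely of $(Y_j,B)$. The resolution is that the index sets $\cB_\e$ and $\cC_\e$ are defined purely in terms of $d_{Z_j,B}$ versus $(1+\e)|Z_j-B|$ and of the cone-neighbor $Y(i_{Z_j,B},Z_j)=Y_j$ of $Z_j$ toward $B$; so a dichotomy on $d_{Z_j,B}$ against $(1+\e)|Z_j-B|$ places $(Z_j,B)$ in $\cB_\e$ in one case, and in the other case we need $|Z_j-Y_j|\ge\e|Z_j-B|$ to land in $\cC_\e$. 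But in D3 we have passed D2, so $|Z_j-Y_j|\le\e|Z_j-B|$, which is the \emph{wrong} inequality for $\cC_\e$. This means the correct reading must be that when D3 fires with $d_{Z_j,B}\le(1+\e)|Z_j-B|$ and $|Z_j-Y_j|\le\e|Z_j-B|$, the edges $E(P_{Z_j,B})$ are still short — each of length $\le d_{Z_j,B}\le(1+\e)|Z_j-B|\le(1+\e)R_\e$ — and the point of the lemma together with the later counting lemmas (Lemmas \ref{longpaths}, \ref{E3}) is precisely to absorb these into an $O_\e(n)$ bound; I would therefore check whether $E_4$'s definition should be read with the surrogate pair $(Y_j,B)$ in place of $(Z_j,B)$ in the last coordinate-condition, since $d_{Y_j,B}\ge(1+5\e)|Y_j-B|$ is what D3 actually asserts, making $(Y_j,B)\in\cB_\e$ after adjusting the constant $\e\mapsto 5\e$ and noting $|Y_j-B|\ge|Z_j-B|-|Z_j-Y_j|\ge(1-\e)r_\e\ge r_\e/2\ge r_{5\e}$ for appropriate choice of radii. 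Making this identification precise — and confirming the paper intends the $E_3,E_4$ unions to range over all pairs that can arise as $(Z_j,B)$ or $(Y_j,B)$ in some execution — is the delicate point; the rest is immediate from the length bounds above.
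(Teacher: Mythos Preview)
Your treatment of D1 and D2 matches the paper's proof exactly, and your identification of the obstacle in D3 is correct: having passed D2 forces $|Z_j-Y_j|\le\e|Z_j-B|$, the wrong inequality for $\cC_\e$. But your proposed resolution---reinterpreting $E_4$ via the surrogate pair $(Y_j,B)$, or hoping the later counting lemmas absorb these edges---is not what the paper does, and as written it leaves a genuine gap: you never actually place $E(P_{Z_j,B})$ into $E_1\cup E_3\cup E_4$ in this subcase.

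The paper's resolution is much cleaner: it shows that the troublesome subcase is \emph{impossible}. Assume D3 fires, $d_{Z_j,B}\le(1+\e)|Z_j-B|$, and $|Z_j-Y_j|\le\e|Z_j-B|$. Then
\[
(1+5\e)|Y_j-B|\le d_{Y_j,B}\le |Z_j-Y_j|+d_{Z_j,B}\le (1+2\e)|Z_j-B|\le (1+2\e)\bigl(|Z_j-Y_j|+|Y_j-B|\bigr),
\]
which yields $|Z_j-Y_j|\ge \dfrac{3\e}{1+2\e}|Y_j-B|$. Now split on whether $|Y_j-B|\ge|Z_j-B|/2$: if so, $|Z_j-Y_j|\ge\dfrac{3\e}{2(1+2\e)}|Z_j-B|>\e|Z_j-B|$ for small $\e$, contradicting the D2 pass; if not, the triangle inequality gives $|Z_j-Y_j|\ge|Z_j-B|-|Y_j-B|>|Z_j-B|/2>\e|Z_j-B|$, again a contradiction. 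So whenever D3 fires, one must have $d_{Z_j,B}>(1+\e)|Z_j-B|$, and hence $(Z_j,B)\in\cB_\e$ and $E(P_{Z_j,B})\subseteq E_3$. No redefinition of $E_4$ or surrogate pair is needed; you were missing exactly this short computation that turns the D3 hypothesis on $(Y_j,B)$ into a statement about $(Z_j,B)$.
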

\begin{proof}
First consider the path $P=P_{Z_j,B}$ used in D1. Because $\set{Z_j,B}\in E_1$, we have that $d_{Z_j,B}\leq r_\e$ and so all the edges of $P_{Z_j,B}$ are also in $E_1$. 

Next consider the path $P=P_{Z_j,B}$ used in D2. If $d_{Z_j,B}\geq (1+\e)|Z_j-B|$ then $E(P)\subseteq E_3$. Otherwise, $E(P)\subseteq E_4$.

Now consider the path $P=P_{Z_j,B}$ used in D3. If $d_{Z_j,B}\geq (1+\e)|Z_j-B|$ then $E(P)\subseteq E_3$. So assume that $d_{Z_j,B}\leq (1+\e)|Z_j-B|$. If $|Z_j-Y_j|\geq \e|Z_j-B|$ then $E(P)\subseteq E_4$. So assume that $|Z_j-Y_j|\leq \e|Z_j-B|$. At this point we have
\[
\brac{1+5\e}|Y_j-B|\leq d_{Y_j,B}\leq |Z_j-Y_j|+d_{Z_j,B}\leq (1+2\e)|Z_j-B|\leq \brac{1+2\e}(|Z_j-Y_j|+|Y_j-B|).
\]
This implies that $|Z_j-Y_j|\geq 3\e|Y_j-B|/(1+2\e)$. If $|Y_j-B|\geq |Z_j-B|/2$ then we have $E(P)\subseteq E_4$. So assume that $|Y_j-B|< |Z_j-B|/2$. But then $|Z_j-Y_j|\geq |Z_j-B|-|Y_j-B|\geq |Z_j-B|/2$, a contradiction.
\end{proof}
The next two lemmas bound the expected number of edges in the sets $E_3,E_{4}$. 
\subsection{$\E(|E_3|)$}\label{secE3}
\begin{lemma}\label{longpaths}
$\E(|E_3|)=O_{\th,\e}\bfrac{n}{p^{\th}}$.
\end{lemma}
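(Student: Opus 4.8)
The plan is to bound $|E_3| = \bigl|\bigcup_{(A,B)\in\cB_\e} E(P_{A,B})\bigr|$ by a weighting/charging argument: every edge we put into $E_3$ lies on a shortest path $P_{A,B}$ for some ``bad'' pair $(A,B)$ (meaning $d_{A,B}\ge(1+\e)|A-B|$ and $|A-B|\ge r_\e$), and by Lemma \ref{paths} we may further assume $|A-B|\le R_\e$ throughout. The first step is to show that bad pairs are rare. Fix $A$ and a scale $s\in[r_\e,R_\e]$ (dyadically); I would argue that for a fixed $B$ with $|A-B|\in[s,2s]$, the probability that $(A,B)$ is bad is small --- this is essentially the content of Lemma \ref{farx} applied at scale $s$ (or its proof, re-run with $r_\e$ in place of $R_\e$): the event $d_{A,B}\ge(1+\e)|A-B|$ forces the two-hop ``bridge'' construction through balls of radius $\e s$ around the two interior points $X_1,X_2$ of segment $AB$ to fail, which happens with probability at most $\exp(-c\, s^2 np) \le \exp(-c\, r_\e^2 np) = \exp(-c\, M_{\th,\e}/p^\th)$; choosing $M_{\th,\e}$ large makes this as small a negative power of (a suitable quantity) as we like. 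Summing over the $O(ns^2 n)$ candidate pairs at scale $s$ and over $O(\log n)$ dyadic scales gives that the expected number of bad pairs is $O_{\th,\e}(n/p^\th)$ --- indeed one can make it $O_{\th,\e}(n \cdot p^{-\th} \cdot (\text{anything}))$ by taking $M$ large, so in particular $o(n)$ is out of reach but $n/p^\th$ is comfortably attained; I'd actually aim to show $\E(\#\text{bad pairs at scale } s) = O(n^2 s^2 p)\cdot e^{-cM/p^\th}$ and sum.

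The second step converts a bound on the number of bad pairs into a bound on the number of edges of $E_3$. Each bad pair $(A,B)$ has $|A-B|\le R_\e$, and by Lemma \ref{paths} only edges of length $\le R_\e$ are contributed; moreover a shortest path $P_{A,B}$ in $\cX_p$ between two points at Euclidean distance $\le R_\e$, once we condition on the good event (from Lemmas \ref{far}, \ref{farx}, and event $\cR$-type bounds) that $d_{A,B}\le(1+4\e)|A-B|$, has total Euclidean length $O(R_\e)$ and hence --- since every edge used in building $E_3$ will be shown to have length $\ge r_\e$ is \emph{false}; rather, edges can be short, so instead I bound the number of edges on $P_{A,B}$ by noting each such path lies in a ball of radius $O(R_\e)$ around $A$, which whp (by a Chernoff/union bound as in the $\cR$ argument of Section 2) contains at most $O(R_\e^2 np) = O(\log n)$ vertices, hence $P_{A,B}$ has $O(\log n)$ edges. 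Thus $|E_3| \le \sum_{\text{bad }(A,B)} |E(P_{A,B})| = O(\log n)\cdot(\#\text{bad pairs})$, and taking $M_{\th,\e}$ large enough to absorb the extra $\log n$ and the $n^2$ vs $n$ discrepancy in the pair count yields $\E(|E_3|) = O_{\th,\e}(n/p^\th)$.

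There is one subtlety to handle carefully: the good events (Lemmas \ref{far}, \ref{farx}, the bounded-degree-at-scale-$R_\e$ event) each hold with probability $1-o(n^{-10})$, but $|E_3|$ could in principle be as large as $|E(\cX_p)| = O(n^2 p)$ on the bad event, so one must check that $o(n^{-10})\cdot O(n^2 p) = o(1)$ contributes negligibly to the expectation --- this is immediate. The cleanest route is therefore: condition on the intersection $\cG$ of all these whp events; on $\cG$, run the charging argument deterministically in terms of the (random) set of bad pairs; take expectations using linearity over pairs $(A,B)$ and the per-pair probability bound from step one; and add the negligible $\Pr(\overline{\cG})\cdot O(n^2p)$ term.

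\textbf{Main obstacle.} The crux is step one --- getting a per-pair probability of ``$(A,B)$ is bad'' that beats the $\Theta(n^2)$ pairs at each scale by enough to leave only $O_{\th,\e}(n/p^\th)$ in expectation, uniformly over scales $s\in[r_\e,R_\e]$. At the smallest scale $s\asymp r_\e$ the bridging balls have radius $\e r_\e$ and contain only $\Theta(\e^2 M_{\th,\e}/p^\th)$ expected points, so the failure probability is merely $\exp(-\Omega(\e^2 M_{\th,\e}/p^\th))$, not a negative power of $n$; one must check this is small enough that, multiplied by $n^2 r_\e^2 p = O(M_{\th,\e} n/p^\th)$ pairs and by the $O(\log n)$ path-length factor, the product is still $O_{\th,\e}(n/p^\th)$ --- which forces a careful choice of $M_{\th,\e}$ as a function of $\e$ and $\th$ (and is presumably where the $\e^{-O(1/\e)}$-type dependence enters), but does go through because we have the freedom to take $M_{\th,\e}$ as large as needed. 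A secondary technical point is verifying that conditioning on $\cG$ does not destroy the independence used in the per-pair estimate; this is handled by using the same two-point-bridging events as in Lemma \ref{farx}, which depend only on edges incident to a bounded-size local neighborhood, so that the conditioning can be absorbed into the constants.
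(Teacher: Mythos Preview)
Your plan has a genuine gap in the ``edges per path'' step, and a secondary gap in the ``probability of bad'' step; together they mean the factorisation $|E_3|\lesssim(\#\text{bad pairs})\times(\text{edges per path})$ does not close.

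\textbf{Edges per path.} You assert that $P_{A,B}$ lies in a ball of radius $O(R_\e)$ around $A$ and that such a ball contains $O(R_\e^2np)=O(\log n)$ vertices. Both parts are wrong. First, the number of \emph{points} of $\cX$ in a ball of radius $R_\e$ is $\Theta(R_\e^2 n)=\Theta(\log n/p^{1+\th})$; the extra factor $p$ you inserted has no source, since vertices of $P_{A,B}$ are not required to be $\cX_p$-neighbours of $A$. Second, and more seriously, for $(A,B)\in\cB_\e$ you only know $d_{A,B}\ge(1+\e)|A-B|$; there is no upper bound on $d_{A,B}$, so $P_{A,B}$ need not stay in any ball of radius $O(R_\e)$. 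Conditioning on the good event from Lemma~\ref{farx} does not help: that lemma only applies when $|A-B|\ge R_\e$, whereas here $|A-B|\in[r_\e,R_\e]$, and you yourself note that at scale $r_\e$ the bridging construction fails with probability $\exp(-\Theta(M_{\th,\e}/p^\th))$, which is not $o(n^{-10})$.

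\textbf{Probability of bad.} The three-edge bridge of Lemma~\ref{farx} produces a path of length at most $(1+4\e)|A-B|$, not $(1+\e)|A-B|$. Hence its failure does not bound $\Pr(d_{A,B}\ge(1+\e)|A-B|)$; you would need a sharper construction.

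\textbf{What the paper does instead.} The paper does not separate ``number of bad pairs'' from ``edges per path''. It writes
\[
|E_3|\le \sum_{A,B}\sum_{K\ge1}\sum_{k\ge1} k\,\bigl|\{\text{induced paths $A\to B$ with $k{+}1$ edges and length in }[(1{+}K\e)r,(1{+}(K{+}1)\e)r]\}\bigr|,
\]
and bounds the expectation of each summand. The factor $(1-p)^{\binom{k}{2}}$ from ``induced'' is what makes the sum over $k$ converge (this is precisely the control of edge-count you are missing). For the sum over $K$ and the integral over $r$, the paper builds a $\g$-vertex path through thin rhombi (or rectangles when $K\e\ge1$), with $\g=\lceil1+1/\th\rceil$, that has length $<(1+K\e)r$, and applies Janson's inequality to the event that no such path exists; crucially this path is taken to avoid the $k$ internal vertices $P_1,\dots,P_k$, which is what decouples the two events and replaces your hand-wave about ``bounded-size local neighbourhoods''. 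The choice of $\g$ depending on $\th$ is also where $\th$ enters the constants, not merely through $M_{\th,\e}$.

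In short, the missing idea is the joint decomposition over $(k,K)$ together with the induced-path factor and the Janson/rhombus argument; without these you cannot control simultaneously how long (in edges) and how stretched (in ratio $d_{A,B}/|A-B|$) the shortest path is.
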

\begin{proof}
  Fix a pair of points $A,B\in \cX$ and let $r=|A-B|$ where $r_\e\leq r\leq R_\e$ ($r_\e,R_\e$ defined in \eqref{Ralf}).  Note next that shortest paths are always induced paths. We let $\cL_{K,k,A,B}$ denote the set of induced paths from $A$ to $B$ with $k+1\geq 2$ edges in $\cX_p$, of total length in $[(1+K\e)r,(1+(K+1)\e)r]$.

We let $L_{K,k,A,B}=|\cL_{K,k,A,B}|$. Then we have
\begin{equation}\label{Easum}
|E_3|\leq\sum_{A,B\in\cX}\sum_{k,K=1}^\infty k|\set{P\in\cL_{K,k,A,B}}|.
\end{equation}
This is because if $d_{A,B}\geq (1+\e)|A-B|$ then the shortest path from $A$ to $B$ has its length in $J_{K,r}=[(1+K\e)r,(1+(K+1)\e)r]$, for some $K\geq 1$. Next define, for $L\geq 1$,
\[
F(L,\e):=(2L\e+L^2\e^2)^{1/2}.
\]
\begin{claim}\label{cl1a}
There are constants $\La,c$ such that for $K\geq 1$,
\beq{sumBa}{
  \E\brac{L_{K,k,A,B}\middle| |A-B|=r} \leq  \bfrac{\La F(K+1,\e)(1+(K+1)\e) r^2np(1-p)^{(k-1)/2}}{k^2(K\e(1+K\e))^{1/4}}^{k}e^{-cF(K+1,\e)(1+(K+1)\e)r^2np}.
}
\end{claim}
{\bf Proof of Claim \ref{cl1a}}:
Let $E_{A,B}(L)$ denote the ellipse with centre the midpoint of $AB$, foci at $A,B$ so that one axis is along the line through $AB$ and the other is orthogonal to it. The axis lengths $a,b$ being given by $a=(1+L\e)r$ and $b=r((1+L\e)^2-1)^{1/2}=rF(L,\e)$. Thus $E_{A,B}(L)$ is the set of points whose sum of distances to $A,B$ is at most $(1+L\e)r$. 

Given $k$ points $P_1,\dots,P_k$, the path  $P=(A=P_0,P_1,\dots,P_k,P_{k+1}=B)$ is of length at most $(1+(K+1)\e)r$ only if all these points lie in $E_{A,B}(K+1)$. Thus for all $i$ the point $P_{i+1}$ lies in an ellipse with axes $2a,2b$ centred at $P_i$.  Here we are using the fact that if a point $x$ lies in an ellipse $E$ then $E$ is contained in a copy of $2E$ centered at $x$. Indeed, suppose that $(x_i,y_i),i=1,2$ are two points in the ellipse $E=\set{\frac{x^2}{\xi^2}+\frac{y^2}{\eta^2}\leq 1}$.
Then
\begin{equation}\label{e.2E}
\frac{(x_1-x_2)^2}{\xi^2}+\frac{(y_1-y_2)^2}{\eta^2}\leq \frac{2(x_1^2+x_2^2)}{\xi^2}+ \frac{2(y_1^2+y_2^2)}{\eta^2}=2\sum_{i=1}^2\brac{\frac{x_i^2}{\xi^2}+\frac{y_i^2}{\eta^2}}\leq 4.
\end{equation}
It follows that $(x_1,y_1)$ is contained in a copy of $2E$ centered at $(x_2,y_2)$.

So, the probability of the event that $(A=P_0,P_1,\dots,P_k)$ is in $E_{A,B}(K+1)$ is at most $\prod_{i=1}^k\Pr(\cP_i)$ where $\cP_i$ is the event that $P_{i+1}$ is in the ellipse congruent to $2E_{A,B}(K+1)$, centred at $P_i$. So,
\beq{cPi}{
\Pr(\text{$(A=P_0,P_1,\dots,P_k,B)$ is in $E_{A,B}(K+1)$})\leq (\p r^2F(K+1,\e)(1+(K+1)\e))^kp.
}
The final $p$ factor is $\Pr(\set{P_k,B}\in E)$. Given $\cP_1,\cP_2,\ldots,\cP_k$ the length of $P$ is at most the sum $Z_1+\cdots+Z_k$ of independent random variables where $Z_i$ is the distance to the origin of a random point in an ellipse with axes $2a,2b$ centred at the origin.
\begin{lemma}\label{tomasz}
\begin{enumerate}[(a)]
\item $Z_1$ is distributed as $2(U(a^2\cos^2(2\p V)+b^2\sin^2(2\p V)))^{1/2}$ where $U,V$ are independent uniform $[0,1]$ random variables.
\item $Z_1$ stochastically dominates $\z^{-1/2} U^{1/2}(K\e(1+K\e))^{1/4}r$ for some $\z>0$.
\end{enumerate}
\end{lemma}
\begin{proof}
(a) This follows from the fact that a point in $E$ is of the form $(a\cos2\p\th,b\sin2\p\th)u$ where $0\leq u,\th\leq 1$.

(b) We have 
\begin{align*}
\Pr(Z_1\leq x)&=\Pr\brac{U\leq \frac{x^2}{4(a^2\cos^2(2\p V)+b^2\sin^2(2\p V))}}\\ &=\E\brac{\min\set{1,\tfrac14x^2(a^2\cos^2(2\p V)+b^2\sin^2(2\p V))^{-1}}}\\
&\leq \min\set{1,\E\brac{\frac{x^2}{a^2\cos^2(2\p V)+b^2\sin^2(2\p V)}}}.
\end{align*}
Now
\begin{align*}
\E\brac{\frac{1}{a^2\cos^2(2\p V)+b^2\sin^2(2\p V)}}&=\frac{2}{\p}\int_{z=0}^{\p/2}\frac{dz}{a^2\cos^2(z)+b^2\sin^2(z)}=\frac{2}{\p}\int_{z=0}^{\p/2}\frac{dz}{a^2\sin^2(z)+b^2\cos^2(z)}\\
&=\frac{2}{\p}\int_{z=0}^{\p/2}\frac{dz}{(a^2-b^2)\sin^2(z)+b^2}\\
&\leq\frac{4}{\p}\int_{z=0}^{1/2}\frac{dz}{(a^2-b^2)z^2+b^2}+O\bfrac{1}{a^2}\\
&=\frac{4}{\p r^2}\int_{z=0}^{1/2}\frac{dz}{z^2+2K\e+K^2\e^2}+O\bfrac{1}{(1+(K+1)\e)^2r^2}.\\
&=\frac{4}{\p r^2}\frac{\arctan{\bfrac{1}{2(2K\e+K^2\e^2)^{1/2}}}}{(2K\e+K^2\e^2)^{1/2}}++O\bfrac{1}{(1+(K+1)\e)^2r^2}.
\end{align*}
So
\[
\Pr(Z_1\leq x)\leq \frac{\z x^2}{(K\e(1+K\e))^{1/2} r^2}
\]
for some $\z>0$.

This implies that $Z_1$ dominates $\z^{-1/2} U^{1/2}(K\e(1+K\e))^{1/4}r$. 
\end{proof}
Lemma 9 of Frieze and Tkocz \cite{FT1} implies that if $U_1,U_2,\ldots,U_k$ are independent copies of $U^{1/2}$ then
\[
\Pr(U_1^{1/2}+U_2^{1/2}+\cdots+U_k^{1/2}\leq u)\leq \frac{(2u)^{2k}}{(2k)!}.
\] 
Putting $u=\frac{\a^{1/2}}{ (K\e(1+K\e))^{1/4}r}$, we see that 
\beq{disk}{
\Pr(Z_1+Z_2+\cdots+Z_k\leq (1+(K+1)\e)r)\leq \bfrac{\a(1+(K+1)\e)}{(K\e(1+K\e))^{1/4}}^{k} \frac{2^k}{(2k)!}\leq  \bfrac{\a(1+(K+1)\e)}{(K\e(1+K\e))^{1/4}}^{k} \frac{e^{2k}}{k^{2k}2^k}.
}
Thus, given $k$ random points $P_1,\dots,P_k$, the probability that $A,P_1,\dots,P_k$ is an induced path of length $\leq (1+(K+1)\e)r$ is at most
\[
\bfrac{\La F(K+1,\e)(1+(K+1)\e)r^2np(1-p)^{(k-1)/2}}{k^2(K\e(1+K\e))^{1/4}}^{k}.
\]
To get the exponential term in  \eqref{sumBa}, we need to also make make use of the fact that $d_{A,B}\geq (1+\e K)r$.  

{\bf Case 1: $K\e\leq 1$:} Let $\g=\rdup{1+\th^{-1}}$. We define $\g$ rhombi, $R_i,i=1,2,\ldots,\g$. We partition $AB$ into $\g$ segments $L_1,L_2,\ldots,L_\g$ of length $r/\g$. The rhombus $R_i$ has one diagonal $L_i$ and another diagonal of length $h=((K+1)\e)^{1/2}r/10\g$ that is orthogonal to $AB$ and bisects it. Finally let $\wR_i=R_i\cap [0,1]^2$. Note that $\wR_i$ has area at least 1/2 of the area of $R_i$. Thus if $K\geq 1$ then since $K\e\leq 1$,
\beq{area}{
\a\geq \a_i=\area(\wR_i)\geq\frac{((K+1)\e)^{1/2}r^2}{20\g}\geq \frac{\a}{100} 
}
where 
\[
\a=\frac{F(K+1,\e)(1+(K+1)\e)r^2}{\g}.
\]

For a pair of points $A,B$ and set $X\subseteq \cX$, let $\td_{A,B}(X)$ denote the minimum length of a path\\ $Q=(A,S_1,S_2,\ldots,S_\g,B)$ in $\cX_p$ where $S_i\in \wR_{i}\setminus X$. Here $X$ will stand for $P_1,P_2,\ldots,P_k$ in the analysis below. Furthermore we can restrict our attention to $|X|=k=o(n)$, as shown in \eqref{k1} below. We first wish to show that
\beq{ellQ}{
\ell(Q)<(1+K\e)r\text{ for all choices of }S_1,S_2,\ldots,S_\g.
}
 Now fix $i$ and consider the function $f(S)=\ell(A,S_1,S_2,\ldots,S_{i-1},S,S_{i+1},\ldots,,S_\g,B)$. This is a convex function of $S$ and so it is maximised at an extreme point of $\wR_{i}\setminus X$. Thus to verify \eqref{ellQ}, it is enough to check paths that only use the vertices of the rhombi. We claim that
\beq{lQ}{
\ell(Q)\leq \g\brac{4h^2+\frac{1}{\g^2}}^{1/2}r\leq r\g\brac{2h+\frac{1}{\g}}\leq (1+(K+1)\e)r
}
where we have used $K\e\leq 1$ for the last inequality. Equation \eqref{lQ} follows from the fact that $\brac{4h^2+\frac{1}{\g^2}}^{1/2}r$ maximises the distance between points in adjacent rhombi.
 
Let $Z$ denote the number of paths $Q$ such that all edges exist in $\cX_p$. We use Janson's inequality \cite{JAN} to bound the probability that $Z=0$. We have, with $\n=n-|X|=n-o(n)$,
\[
\E(Z)= \n(\n-1)\cdots(\n-\g+1) p^{\g+1}\prod_{i=1}^\g \a_i\geq \bfrac{\a np}{100}^\g \frac{p}2.
\]
Then for a pair of paths $Q,Q'$ let $\r(Q,Q'),\s(Q,Q')$, denote the number of vertices and edges the $Q,Q'$ have in common. (Exclude $A,B$ from this count.) We write $Q\sim Q'$ to mean that $\r(Q,Q')>0$. Then, 
\beq{D2}{
\bar \D=\sum_{Q\sim Q'}\Pr(Q,Q')\leq 2^{2\g}\sum_{\substack{1\leq \s\leq \g+1\\\s\leq \r\leq 2\s}}(\a n)^{2\g-\r} p^{2\g+2-\s}\leq 2^{2\g+1}(\a n)^{2\g-1}p^{2\g+1}.
}
{\bf Explanation for \eqref{D2}} Because $r\geq r_ \e$, we have $\a np\gg1$. Thus the sum in \eqref{D2} is dominated by the term $\r=\s=1$ where $Q,Q'$ only share an edge incident to $A$ or $B$. The factor $2^{2\g}$ accounts for the places on $Q,Q'$ that share a common vertex.

It follows that if $K\geq 1$ then
\mults{
\r_{k,K,\e}=\Pr(d^*_{A,B}\geq (1+K\e)r\mid |A-B|=r,P_1,\dots,P_k)\leq \exp\set{-\frac{\E(Z)^2}{2\bar \D}}\leq  \\ \exp\set{-\frac{F(K+1,\e)(1+(K+1)\e)r^2np}{2^{2\g+4}10^{4\g}\g}}\leq\exp\set{-\frac{M_{\th,\e}F(K+1,\e)(1+(K+1)\e)}{2^{2\g+4}10^{4\g}\g p^{\th}}}
}

{\bf Case 2: $K\e\geq 1$:} Let $R$ be the rectangle with center the midpoint of $AB$ and one side of length $(1+(K+1)\e/10)r$ parallel to $AB$ and the other of side $K\e/10$ orthogonal to $AB$. We partition $R$ into rectangles $W_1,W_2,\ldots,W_\g$ where each $W_i$ has side lengths $(1+(K+1)\e/10)r/\g$ and $K\e/10$. Putting $\wh W_i=W_i\cap [0,1]^2,i=1,2,\ldots,\g$ we see that all we need do now is to prove the equivalent of \eqref{area} and \eqref{ellQ}. Then,
\[
\area(\wh W_i)\geq \brac{1+\frac{(K+1)\e}{10}}\frac{K\e}{20\g}r^2\geq \frac{F(K+1,\e)(1+(K+1)\e)}{1000\g}r^2.
\]
We have used $K\e\geq 1$ to justify the second inequality. 

We further have that for all $S_i\in\wS_i,i=1,2,\ldots,\g$ that, using the triangle inequality, 
\[
\ell(A,S_1,\ldots,S_\g,B)\leq \g\brac{1+\frac{(K+1)\e}{10}}\frac{r}\g+\g\brac{\frac{K\e}{10}+\frac{4(K+1)\e}{10}}\frac{r}\g<(1+(K+1)\e)r.
\]

Thus, the probability $\rho_{k,K,\e}$ defined above satisfies
\[
\rho_{k,K,\e}\leq  \bfrac{\La F(K+1,\e)(1+(K+1)\e) r^2np(1-p)^{(k-1)/2}}{k^2}^{k} e^{-cF(K+1,\e)(1+(K+1)\e)r^2np},
  \]
and the claim follows by linearity of expectation.

{\bf End of proof of Claim \ref{cl1a}}

It will be convenient to replace $r$ by $\frac{\r}{(np)^{1/2}}$ and write $J_\r=[\frac{\r}{n^{1/2}},\frac{\r+1}{n^{1/2}}]$ and let $\r_{\min}=r_\e (np)^{1/2}$. Then, 
\begin{align}
&\E(|E_3|)\nonumber\\
&\leq \binom{n}{2}\sum_{\r=\r_{\min}}^{\infty}\sum_{K=1}^{\infty}\sum_{k=1}^{n-2}k \bfrac{\La F(K+1,\e)(1+(K+1)\e) r^2np(1-p)^{(k-1)/2}}{k^2(K\e(1+K\e))^{1/4}}^{k}\nn\\
&\hspace{3in}\times e^{-cF(K+1,\e)(1+(K+1)\e)r^2np}\Pr(|A-B|\in J_\r)\nonumber\\
&\leq \binom{n}{2}\p \sum_{\r=\r_{\min}}^{\infty}\sum_{K=1}^{\infty}\sum_{k=1}^{n-2}k \bfrac{\La F(K+1,\e)1+(K+1)\e) r^2np(1-p)^{(k-1)/2}}{k^2(K\e(1+K\e))^{1/4}}^{k}\nn\\ &\hspace{3in}\times e^{-cF(K+1,\e)(1+(K+1)\e)r^2np}\bfrac{2\r+1}{n}\nonumber\\
&\leq 2\p n\sum_{k=1}^{n-2}k\sum_{K=1}^{\infty} \bfrac{\La  F(K+1,\e)1+(K+1)\e)(1-p)^{(k-1)/2}}{k^2(K\e(1+K\e))^{1/4}}^{k} \sum_{\r=\r_{\min}}^{\infty} e^{-cF(K+1,\e)(1+(K+1)\e)\r^2}\r^{2k+1}\nonumber\\
&\leq 2\p n\sum_{k=1}^{n-2}k\sum_{K=1}^{\infty}\bfrac{\La  F(K+1,\e)1+(K+1)\e)(1-p)^{(k-1)/2}}{k^2(K\e(1+K\e))^{1/4}}^{k} \int_{s=0}^{\infty} e^{-cF(K+1,\e)(1+(K+1)\e)s}s^{k}ds\nonumber\\
&=2\p n\sum_{k=1}^{n-2}k\sum_{K=1}^{\infty} \bfrac{\La  F(K+1,\e)1+(K+1)\e)(1-p)^{(k-1)/2}}{k^2(K\e(1+K\e))^{1/4}}^{k} \bfrac{1}{cF(K+1,\e)(1+(K+1)\e)}^{k+1}k!\nonumber\\
&\leq 2\p n\sum_{k=1}^{n-2}k\bfrac{\La(1-p)^{(k-1)/2}}{k\e^{1/4}}^{k}\sum_{K=1}^{\infty}  \bfrac{1}{cF(K+1,\e)(1+(K+1)\e)} \frac{1}{(K(1+K\e))^{k/4}}\label{E2aa}\\
&=O_\e(n).\nn
\end{align}

\end{proof}

\subsection{$\E(|E_{4}|)$}\label{secE4}
\begin{lemma}\label{expectedpathsab}
The expected number of $(k+1)$-edge induced paths of length at most $(1+\e)r$ from $A$ to $B$ in $\cX_p$ can be bounded by
\begin{equation}
  \brac{n\p r^2p(1-p)^{(k-1)/2}\frac{\e(1+\e)^3e^2}{2k^2}}^k(1-\p \e^3 r^2p)^{n-k-2}p.
\end{equation}
\end{lemma}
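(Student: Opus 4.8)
The plan is to run the ellipse-and-length counting argument from the proof of Claim~\ref{cl1a}, now specialised to paths of length at most $(1+\e)r$ (the ``$K=0$'' regime, which Claim~\ref{cl1a} does not cover because of the $(K\e(1+K\e))^{1/4}$ in its denominator), and then to tack on one extra factor coming from the condition that makes such a path relevant to $E_4$: that $(A,B)\in\cC_\e$, i.e.\ $|A-Y(i_{A,B},A)|\ge\e r$ (with $r=|A-B|\in[r_\e,R_\e]$). Throughout, $A,B,r$ are fixed and so is $k\ge1$. I bound the expected number of such $(k+1)$-edge induced paths $P=(A=P_0,P_1,\ldots,P_k,P_{k+1}=B)$ by linearity of expectation, summing over the $\le n^{k}$ ordered choices of the internal vertices $P_1,\ldots,P_k$ among the $n-2$ points other than $A,B$.

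For a fixed ordered tuple of internal points (uniform and independent), the probability that it yields such a path factors, by independence of point locations from edge indicators and of disjoint sets of edge indicators, into: (i) $p^{k}$ for the spine edges $P_jP_{j+1}$, $0\le j<k$, times $p$ for $P_kB$; (ii) $(1-p)^{\binom k2}=\bigl((1-p)^{(k-1)/2}\bigr)^{k}$ for $\binom k2$ non-adjacencies (those among $P_1,\ldots,P_k$ at path-distance at least $2$, together with enough $A$- and $B$-incident non-adjacencies to reach $\binom k2$) --- the ``induced'' content; and (iii) the probability over the locations of the geometric and length constraints. For (iii), exactly as in Claim~\ref{cl1a}, total length at most $(1+\e)r$ forces every $P_i$ into the thin ellipse $E_{A,B}(1+\e)$ with semi-axes $\tfrac12(1+\e)r$ and $\tfrac12(2\e+\e^2)^{1/2}r$, and hence each $P_{i+1}$ into a translate of $2E_{A,B}(1+\e)$ centred at $P_i$; writing $Z_i=|P_{i+1}-P_i|$, one combines the product of these ellipse areas with the concentration bound for $Z_1+\ldots+Z_k\le(1+\e)r$, using the analogue of Lemma~\ref{tomasz} (so $Z_i$ stochastically dominates a multiple of $(2\e+\e^2)^{1/4}r\,U_i^{1/2}$, the degenerate $(K\e(1+K\e))^{1/4}$ being replaced by $F(1,\e)$) together with Lemma~9 of \cite{FT1}, which gives $\Pr(\sum U_i^{1/2}\le u)\le(2u)^{2k}/(2k)!$. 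Bounding $(2k)!\ge(2k/e)^{2k}$ and collecting the $(1+\e)$- and $\e$-powers, one arrives at the per-step factor $\p r^{2}\dfrac{\e(1+\e)^{3}e^{2}}{2k^{2}}$ in the statement.

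The remaining factor $(1-\p\e^{3}r^{2}p)^{n-k-2}$ comes from $(A,B)\in\cC_\e$. Since $Y(i_{A,B},A)$ is the nearest $\cX_p$-neighbour of $A$ in the cone $K(i_{A,B},A)$ containing $B$, the event $|A-Y(i_{A,B},A)|\ge\e r$ is precisely that no $\cX_p$-neighbour of $A$ lies in the sector $K(i_{A,B},A)\cap B(A,\e r)$ --- a region depending only on $A,B$, of area $\p\e^{3}r^{2}$ (a fraction $\e$ of the disk $B(A,\e r)$, up to the factor $\tfrac12$ near $\partial[0,1]^2$ which one absorbs). I would drop the requirement that $P_1,\ldots,P_k$ avoid this sector and keep only the requirement that none of the $n-k-2$ vertices outside $\{A,B,P_1,\ldots,P_k\}$ be simultaneously located in the sector and adjacent to $A$; this event depends only on the locations of those $n-k-2$ points and on their $A$-incident edges, all independent of everything governing the existence of $P$, so it contributes the independent factor $(1-\p\e^{3}r^{2}p)^{n-k-2}$. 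Multiplying the pieces and applying linearity of expectation gives the claimed bound.

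The step I expect to be the real obstacle is the end of the second paragraph: making the ellipse-area product and the length-concentration estimate combine to exactly $\tfrac{\e(1+\e)^{3}e^{2}}{2k^{2}}$. The ellipse areas carry a power of $(2\e+\e^2)^{1/2}\asymp\e^{1/2}$ while the concentration factor carries a compensating $(2\e+\e^2)^{-1/2}$, so one must be careful not to lose (or spuriously gain) powers of $\e$: in particular one should exploit that consecutive path vertices are confined to the \emph{same} thin ellipse $E_{A,B}(1+\e)$ --- rather than merely bounding $P_{i+1}$ by the area of the dilate $2E_{A,B}(1+\e)$ --- and one must handle the boundary effects near $\partial[0,1]^2$ uniformly in $r\in[r_\e,R_\e]$. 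The rest is bookkeeping entirely parallel to Claim~\ref{cl1a}.
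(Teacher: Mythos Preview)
Your plan is exactly the paper's approach: the paper's proof lists the same four ingredients you identify (the $k$ spine edges and the final edge giving $p^{k}\cdot p$, the $\binom{k}{2}$ non-adjacencies giving $(1-p)^{k(k-1)/2}$, the ellipse containment from the discussion before \eqref{e.2E}, and the cone-emptiness event contributing $(1-\p\e^{3}r^{2}p)^{n-k-2}$), and then simply asserts the intermediate bound
\[
\rho_k\le (2\p\e(1+\e)r^{2}p)^{k}(1-p)^{k(k-1)/2}\Bigl(\tfrac{e^{2}(1+\e)^{2}}{2k^{2}}\Bigr)^{k}p,
\]
multiplies by $n^{k}$, and attaches the cone factor.

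Your flagged obstacle is real and the paper does not resolve it either. If you run the Claim~\ref{cl1a}/Lemma~\ref{tomasz} machinery with $L=K+1=1$, the ellipse area contributes $\p(1+\e)r^{2}(2\e+\e^{2})^{1/2}$ per step while the length-concentration estimate contributes $\bigl(C(1+\e)^{2}/((2\e+\e^{2})^{1/2}k^{2})\bigr)$ per step, and the $(2\e+\e^{2})^{1/2}$ factors cancel, leaving a per-step constant of order $(1+\e)^{3}/k^{2}$ with no first-power $\e$. The paper's asserted $\e$ in the displayed constant therefore does not follow from the argument it points to; your suggested refinement (exploiting that all $P_i$ lie in the \emph{same} thin ellipse rather than only in translated dilates) does not recover it either, since that constraint is already implied by the length bound. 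Fortunately this does not damage the downstream use in Lemma~\ref{E3}: the only role of the per-step constant is to be $O_{\e}(1)$, and losing one power of $\e$ is absorbed by choosing $M_{\th,\e}$ larger in \eqref{defR} (cf.\ the inequality $(eC/x^{2})^{x}\le e^{2C^{1/2}}$ used in \eqref{qaz1}). So proceed with your plan, but record the per-step factor as $C(1+\e)^{3}/k^{2}$ rather than $\e(1+\e)^{3}e^{2}/(2k^{2})$ and note that this is what the ellipse argument actually yields.
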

\begin{proof}
  Let $\rho_k$ denote the probability that $k$ fixed points $X_1,\dots,X_k$ satisfy that:
  \begin{itemize}
  \item $A=X_0,X_1,\dots,X_k$ is an induced path
  \item For all $i=1,\dots,k$, $X_i$ lies in a copy of the ellipse $2\cdot E_{A,B}$, translated to be centered at $X_{i-1}$, and
  \item The total length of the path has total length at most $(1+\e)r$.
\item $\set{X_k,B}\in \cX_p$.
  \end{itemize}
From the discussion immediately prior to \eqref{e.2E}, we see that $\r_k$ bounds the probability that the the path has total length at most $(1+\e)r$.  So we have that
  \[
  \rho_k\leq (2\p\e(1+\e)r^2p)^k(1-p)^{k(k-1)/2}\bfrac{e^2(1+\e)^2}{2k^2}^kp.
  \]
  Thus, by linearity of expectation, the number of induced paths $A=X_0,\dots,X_k$ such that
  \begin{itemize}
  \item the total length of the path is at most $(1+\e)r$, and
  \item no point off the path lies within distance $\e r$ of $A$ in the cone $K(i,A)$
  \end{itemize}
  is at most
  \mults{
n^k(2\p\e(1+\e)r^2p)^k(1-p)^{k(k-1)/2}\bfrac{e^2(1+\e)^2}{2k^2}^k(1-\p \e^3 r^2p)^{n-k-2}p=\\ \brac{\frac{n\p r^2p(1-p)^{(k-1)/2}}{1-\p \e^3 r^2p}\frac{\e(1+\e)^3e^2}{2k^2}}^k(1-\p \e^3 r^2p)^{n-k-2}p\leq\\
 \brac{n\p r^2p(1-p)^{(k-1)/2}\frac{\e(1+\e)^3e^2}{3k^2}}^k(1-\p \e^3 r^2p)^{n-k-2}p.
}
\end{proof}

\begin{lemma}\label{E3}
$\E(|E_4|)=O_\e(n)$.
\end{lemma}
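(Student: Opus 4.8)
The plan is to mirror the structure of the proof of Lemma~\ref{longpaths} (the bound on $\E(|E_3|)$), using Lemma~\ref{expectedpathsab} in place of Claim~\ref{cl1a}. Recall that $E_4$ collects the edges of shortest paths $P_{A,B}$ over pairs $(A,B)\in\cC_\e$, i.e.\ pairs with $d_{A,B}\le(1+\e)|A-B|$, with $r=|A-B|\in[r_\e,R_\e]$, and with $|A-Y|\ge\e|A-B|$ where $Y=Y(i_{A,B},A)$. The last condition is the crucial one: it says that the cone $K(i_{A,B},A)$ containing $B$ has \emph{no} $\cX_p$-neighbor of $A$ within distance $\e r$ of $A$. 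This is exactly the event whose probability $(1-\p\e^3 r^2p)^{n-k-2}$ appears as the extra factor in Lemma~\ref{expectedpathsab}, and it is what will make the sum over $r$ converge despite the absence of the $K$-dependent exponential decay that saved us in Case~1/Case~2 of Lemma~\ref{longpaths}.

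First I would write $|E_4|\le\sum_{A,B\in\cX}\sum_{k=1}^{n-2}k\,|\{k\text{-edge induced }A\text{--}B\text{ paths of length }\le(1+\e)r,\ \text{with no }\cX_p\text{-neighbor of }A\text{ within }\e r\text{ in }K(i_{A,B},A)\}|$, exactly as in \eqref{Easum}, and take expectations. By Lemma~\ref{expectedpathsab} the inner expectation is at most $\big(n\p r^2 p(1-p)^{(k-1)/2}\frac{\e(1+\e)^3e^2}{3k^2}\big)^k(1-\p\e^3 r^2p)^{n-k-2}p$. Then, just as in the display leading to \eqref{E2aa}, I substitute $r=\r/(np)^{1/2}$, bound $\Pr(|A-B|\in J_\r)\le \p(2\r+1)/n$, and bound $(1-\p\e^3 r^2 p)^{n-k-2}\le e^{-\p\e^3\r^2(1-o(1))}$ (valid since $r\le R_\e$ forces $\r^2=O(\log n)$, so $k=o(n)$ in the relevant range and $\p\e^3 r^2p=o(1)$). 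This turns the sum over $\r$ into $\sum_\r e^{-c\e^3\r^2}\r^{2k+1}$, which I compare to $\int_0^\infty e^{-c\e^3 s}s^k\,ds=k!\,(c\e^3)^{-(k+1)}$. After collecting terms the bound becomes $O(n)\sum_{k\ge1}k\cdot k!\,k^{-2k}\big(\La'(1-p)^{(k-1)/2}\e^{-2}\big)^k$ for an absolute constant $\La'$; using $k!\le k^k$ this is $O(n)\sum_k k^{1-k}(\La''\e^{-2})^k(1-p)^{k(k-1)/2}$, which converges (the $k^{-k}$ beats everything) to $O_\e(n)$.

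The one genuine subtlety, and the step I expect to be the main obstacle, is matching the "no neighbor of $A$ within $\e r$ in the relevant cone" condition in the definition of $\cC_\e$ with the cleanly stated hypothesis of Lemma~\ref{expectedpathsab}, and making sure the restriction to $r\le R_\e$ is legitimately available. For the latter: Lemma~\ref{paths} already guarantees that only edges of length at most $R_\e$ contribute to $E_4$, so I may truncate the $\r$-sum at $\r_{\max}=R_\e(np)^{1/2}=(K_\th\log n)^{1/2}$, which both justifies $k=o(n)$ and makes the $(1-\p\e^3r^2p)^{n-k-2}\le e^{-(1-o(1))\p\e^3\r^2}$ estimate clean. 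For the former: I need only observe that the event defining membership in $\cC_\e$ implies that the $k$ internal vertices of $P_{A,B}$, together with \emph{all} other points of $\cX$, avoid the $\e r$-disk around $A$ inside cone $K(i_{A,B},A)$, and that disk-sector has area $\ge \p\e^3 r^2/2$ after intersecting with $[0,1]^2$ (the sector has angle $\e$ and radius $\e r$, giving area $\tfrac12\e\cdot(\e r)^2=\tfrac12\e^3 r^2$; the factor $\p$ in the lemma is a harmless over/under-count that I should reconcile, or simply replace by the correct constant — it only affects the absolute constants $c,\La'$). Once this bookkeeping is pinned down, the remainder is the same routine summation as in Lemma~\ref{longpaths}, and I would present it in the same compressed display style, ending with $O_\e(n)$.
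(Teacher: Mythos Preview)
Your proposal is correct and follows the same overall strategy as the paper: invoke Lemma~\ref{expectedpathsab}, integrate over $r$ (the exponential coming from the ``empty cone-sector'' event), and sum over $k$. The one genuine difference is in how the resulting integral/sum is finished off. The paper keeps the lower limit $r\ge r_\e$, so that after the substitution $s=\p\e^3 r^2np$ the integral becomes $\int_A^\infty s^k e^{-s}\,ds$ with $A=\Theta(M_{\th,\e}p^{-\th})$ large; it then uses the incomplete-Gamma estimate $\int_A^\infty s^ke^{-s}\,ds\le 2e^{-A}A^k$ for $k\le A/2$, and splits the $k$-sum at $k_0=10\log_{1/(1-p)}(1/\e)$, using the $e^{-A}$ factor for $k\le k_0$ and the $(1-p)^{(k-1)/2}$ factor for $k>k_0$. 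You instead discard the lower limit, use the full Gamma integral $\int_0^\infty s^ke^{-c\e^3 s}\,ds=k!\,(c\e^3)^{-(k+1)}$, and rely on the $k!/k^{2k}\le k^{-k}$ decay to sum over all $k$ at once. Your route is a bit cleaner in that it needs neither the largeness of $M_{\th,\e}$ nor the split at $k_0$; the paper's route makes the role of $r_\e$ and $(1-p)^{(k-1)/2}$ more visible. One small point to tidy: when you replace $(1-\p\e^3 r^2p)^{n-k-2}$ by $e^{-c\e^3\r^2}$ you should note explicitly that for, say, $k>n/2$ the factor $(1-p)^{k(k-1)/2}$ already makes the contribution negligible, so you may assume $k\le n/2$ and hence $n-k-2\ge n/3$; the paper makes the same passage without comment.
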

\begin{proof}

We have 
\begin{align}
\E(|E_{4}|)&\leq2\p\int_{r=r_\e}^{R_\e}\binom{n}{2}p\sum_{k=1}^\infty k\brac{n\p r^2p(1-p)^{(k-1)/2}\frac{\e(1+\e)^3e^2}{3k^2}}^k(1-\p\e^3 r^2p)^{n-k-2} rdr\label{E2bb}\\
&\leq 2\p\binom{n}{2}p\sum_{k=1}^\infty k\int_{r=r_\e}^{R_\e}\bfrac{e\p\e r^2np(1-p)^{(k-1)/2}}{k^2}^ke^{-\p \e^3r^2np} rdr\nn\\
&\leq \frac{n}{\e^3 }\sum_{k=1}^\infty k\int_{s=A}^\infty\bfrac{e\e(1-p)^{(k-1)/2}  s}{\e^3 k^2}^ke^{-s}ds,\label{pto0}
\end{align}
where $A=\p\e^2 r_\e^2np=M_{\th,\e}p^{-\th}$. Now,
\beq{pconst}{
I_k=\int_{s=A}^\infty s^ke^{-s}=k!\sum_{\ell=0}^k\frac{e^{-A}A^\ell}{\ell!}\leq 2e^{-A}A^k, \qquad\text{if }k\leq A/2.
}
(Use $I_k=kA^{k-1}e^{-A}+kI_{k-1}$ to obtain the equation.)

Using \eqref{pconst} in \eqref{pto0} we get, for small $\e$ and $k_0=10\log_b1/\e$ where $b=1/(1-p)$,
\mult{qaz1}{
\sum_{k=1}^{k_0} k\int_{s=A}^\infty \bfrac{e(1-p)^{(k-1)/2}  s}{\e^2 k^2}^ke^{-s}ds \leq e^{-A}\sum_{k=1}^{k_0}\bfrac{eA }{\e^2 k^2}^k \leq \\ Ak_0\exp\set{-M_{\th,\e}p^{-\th} +(M_{\th,\e}p^{-\th})^{1/2}}
\leq \exp\set{-\frac{M_{\th,\e}}{2p^{\th}}},
}
where we have used $(eC/x^2)^x\leq e^{2C^{1/2}}$ for $C>0$.

Finally,
\beq{qaz2}{
\sum_{k=k_0+1}^{\infty} k\int_{s=A}^\infty \bfrac{e(1-p)^{(k-1)/2}  s}{\e^2 k^2}^ke^{-s}ds \leq  \int_{s=A}^\infty e^{-s}\sum_{k=k_0+1}^{\infty}\bfrac{2e\e^3  s}{k^2}^kds \leq \int_{s=A}^\infty e^{-(1-\e)s}ds \leq e^{-A/2}.
}
Substituting \eqref{qaz1}, \eqref{qaz2} into \eqref{pto0} we see that $\E(|E_{4}|)=O\bfrac{n}{\e^3}$.
\end{proof}
We have argued that {\sc construct} builds a $(1+\e)$-spanner w.h.p. The set of edges in this spanner is that of $\bigcup_{i=0}^4E_i$. Part (a) of Theorem \ref{th2} now follows from \eqref{E00}, \eqref{E2}, Lemma \ref{longpaths} and Lemma \ref{E3}.
\subsection{Concentration of measure}\label{com}
Theorem \ref{th2} claims a high probability result. We apply McDiarmid's inequality \cite{mcd} to prove that $|E_3|,|E_4|$ are within range w.h.p. We do not seem to be able to apply the inequality directly and so a little preparation is necessary. We first let $m=\rdown{1/R_\e}$ and divide $[0,1]^2$ into a grid of $m^2$ subsquares $\cC=(C_1,C_2,\ldots,C_{m^2})$ of size $1/m\geq R_\e$. The Chernoff bounds imply that with probability $1-o(n^{-10})$ each $C\in \cC$ contains at most $\r_0=2nR_\e^2$ randomly chosen points of $\cX$. Suppose that we generate the points one by one and color a point blue if it is one of the first $\r_0$ points in its subsquare. Otherwise, color it red. Let $\cB$ be the event that  all points of $\cX$ are blue and we note that 
\beq{cB}{
\Pr(\cB)=1-o(n^{-10}).
}
Let
\beq{k1}{
\k_1=\frac{100\log^{1/2}n}{p}.
}
The significance of $\k_1$ is that the factors $(1-p)^{k(k-1)/2}$ in equations \eqref{E2aa} and \eqref{E2bb} imply that 
\beq{extra}{
\text{with probability $1-o(n^{-2})$, no path contributing to $E_3$ or $E_4$ has more than $\k_1$ edges.}
} 

We let $Z_3$ denote the number of edges $e=\set{A,B}$ that satisfy
\begin{enumerate}[(i)]
\item $A,B$ are blue.
\item $r_\e\leq |A-B|\leq 2R_\e$ and $|Y(i_{A,B},A)-A|\geq \e|A-B|$.. 
\item $e$ is on an induced path in $\cX_p$ that has length at least $(1+\e)|A-B|$ and at most $\k_1$ edges, each of length at most $R_\e$.
\end{enumerate}
Similarly, let $Z_4$ denote the number of edges $e=\set{A,B}$ that satisfy
\begin{enumerate}[(i)]
\item $A,B$ are blue.
\item $r_\e\leq |A-B|\leq 2R_\e$. 
\item $e$ is on an induced path in $\cX_p$ that has length at most $(1+\e)|A-B|$ and at most $\k_1$ edges, each of length at most $R_\e$.
\end{enumerate}
Let $Z_i',i=3,4$ be defined as for $Z_i$, without (i). Note that Lemma's \ref{longpaths} and \ref{E3} estimate $|E_i|$  through $|E_i|\leq Z_i'$ and showing $\E(Z_i')=O(n)$. Furthermore, $Z_i=Z_i',i=3,4$ if $\cU,\cB$ (see Remark \ref{rem+}) occur and these two events occur with probability $1-o(n^{-10})$. Thus we have for $i=3,4$,
\[
|E_i|\leq Z_i,\text{ w.h.p.}
\]
and
\[
 E(Z_i)\leq \E(Z_i'\mid \cB\cap\cU)\Pr(\cB\cap\cU)+n^2\Pr(\neg\cB\vee\neg\cU)\leq \E(Z_i')+n^2\Pr(\neg\cB\vee\neg\cU)=O(n).
\]
We will therefore bound the probability that either $Z_3$ or $Z_4$ exceeds its mean by $n$. We let $W=Z_3+Z_4$. To apply McDiarmid's Inequality we have to establish a Liptschitz bound for $W$. Our probability space consists of $\vartimes_{i=1}^{m^2}\Omega_i\times \vartimes_{C_j\sim C_k}\Omega_{j,k}$ where $\Omega_i$ is a set of at most $\r_0$ random points in subsquare $C_i$ together with a list of all of the edges inside $C_i$. We say that $C_j\sim C_k$ if there boundaries share a common point. Thus for a fixed $C_j$ there are usually 8 subsquares $C_k$ such that $C_j\sim C_k$. The set $\Omega_{j,k}$ determines the edges between points in $C_j$ and $C_k$. It can be represented by a $\r_0 \times \r_0$ $\set{0,1}$-matrix in which each entry appears independently with probability $p$. All in all there are $n^{1-o(1)}$ components of this probability space.

A point $X\in\cX$ is in at most $\n_0=(9\r_0)^{\k_1}=n^{o(1)}$ of the paths counted by $W$. So, changing an $\Omega_i$ or an $\Omega_{i,j}$ can only change $W$ by at most $\n_1=2\r_0\n_0\k_1=n^{o(1)}$ and so the random variable $W$ is $\n_1$-Liptschitz.. 
It then follows from McDiarmid's inequality that
\[
\Pr(W\geq \E(W)+n)\leq \exp\set{-\frac{n^2}{2n^{1-o(1)}\n_1^2}}=e^{-n^{1-o(1)}}.
\]
This completes the proof of Theorem \ref{th2}.
\section{Proof of Theorem \ref{th3}}
For this we only have to observe that w.h.p. $K(X,i)$ exists for all $X,i$. This follows from the Chernoff bounds and the fact that the expected number of vertices in $K(X,i)$ grows faster than $\log n$. We can therefore use Lemma \ref{sp} to prove the existence of the required spanner.
\section{Summary and open questions}
There is a significant gap between the upper and lower bounds of Theorems \ref{th2} and \ref{th2a}, in their dependence on $\e,p$. Closing this gap is our greatest interest. 

We have considered a Euclidean version, asking for a $(1+\e)$-spanner and random geometric graphs. We could probably extend the results of Theorems \ref{th2}, \ref{th2a},\ref{th3} to $[0,1]^d,d\geq 3$. This does not seem difficiult. There is a slight problem in that the cones $K(i,X)$ intersect in sets of positive volume. The intersection volumes are relatively small and so the problems should be minor. We do not claim to have done this.


\begin{thebibliography}{99}
\bibitem{Aetal} R. Ahmed, G. Bodwin, F. Sahneh, K. Hamm, M. Javad, S. Kobourov and R. Spence, \href{https://arxiv.org/pdf/1909.03152.pdf}{Graph Spanners: A Tutorial Review}.
%
\bibitem{FT} M. Fredman and R. Tarjan, Fibonacci heaps and their uses in improved network optimization algorithms, {\em 25th Annual Symposium on Foundations of Computer Science. IEEE} (1984) 338-346. 
%
\bibitem{FP} A.M. Frieze and W. Pegden, Travelling in randomly embedded random graphs, {\em
Random Structures and Algorithms} 55 (2019) 649-676
%
\bibitem{FP1} A.M. Frieze and W. Pegden, \href{https://arxiv.org/pdf/2105.01718.pdf}{Spanners in randomly weighted graphs: independent edge lengths}.
%
\bibitem{FT1} A.M. Frieze and T. Tkocz, \href{https://arxiv.org/pdf/2005.12241.pdf}{Shortest paths with a cost constraint: a probabilistic analysis}.
%
\bibitem{JAN} S. Janson, Poisson approximation for large deviations, {\em Random Structures and Algorithms} 1 (1990) 221-230.
%
\bibitem{MW} A. Mehrabian and N. Wormald, On the Stretch Factor of Randomly Embedded Random Graphs, {\em Discrete \& Computational Geometry} 49 (2013) 647-658.
%
\bibitem{mcd} C. McDiarmid, On the method of bounded differences, {\em Surveys in combinatorics} 141 (1989) 148-188.
%
\bibitem{NS0} G. Narasimhan and Smid, Approximating the stretch of Euclidean graphs, {\em SIAM Journal on Computing} 30 (2000) 978-989.
%
\bibitem{NS} G. Narasimhan and Smid, Geometric Spanner Networks, Cambridge University Press, 2007. 
%
\bibitem{CCCG}  E. D. Demaine and J. O’Rourke, Open Problems from CCCG 2009, {\em In Proceedings of the 22nd Canadian Conference on Computational Geometry (CCCG 2010)}, 83–86.
%
\bibitem{Pen} M. Penrose, Random Geometric Graphs,  Oxford University Press, 2003.
%
\bibitem{Yao} A.C. Yao, On constructing minimum spanning trees in $k$-dimensional spaces and related problems, {\em SIAM Journal on Computing} 11 (1982) 721-736.
\end{thebibliography}
\end{document}